\documentclass[11pt,pdftex]{article}
\RequirePackage{fix-cm}
\usepackage[T1]{fontenc}
\usepackage{url}
\usepackage{graphics}
\usepackage{graphicx}
\usepackage{diagbox}
\usepackage{slashbox}
\usepackage{color}
\usepackage{amsmath}
\usepackage{amsfonts}
\usepackage{amsthm,amssymb}
\usepackage{tikz,color}
\usepackage{tabularx}
\usepackage{xcolor}
\usepackage{colortbl}

\allowdisplaybreaks

\textwidth = 6 in
\textheight = 8 in
\oddsidemargin = 0.5 in
\evensidemargin = 0.5 in
\topmargin = 0.0 in
\headheight = 0.0 in
\headsep = 0.0 in
\parskip = 0.1in
\parindent = 0.0in
\footskip=55pt
\def\vector#1{{\mathbf {#1}}}

\def\oddeven#1#2{{\text{\rm oe}_{#1,#2}}}
\def\oddodd#1#2{{\text{\rm oo}_{#1,#2}}}
\def\evenodd#1#2{{\text{\rm eo}_{#1,#2}}}
\def\eveneven#1#2{{\text{\rm ee}_{#1,#2}}}

\def\nmodk#1#2{{(#1\,\text{\rm mod}\, #2)}}

\def\M{{\cal M}}

\def\eve{{\,\text{\rm even}}}
\def\odd{{\,\text{\rm odd}}}

\def\CM#1#2{{\ucr(#1,#2)}}

\newcommand{\rr}{{\mathcal R}}

\newcommand{\Cr}{{\hbox{\rm cr}}}
\newcommand{\ucr}{{\hbox{\rm cr}}}
\newcommand{\Crt}{{{\nu}_2}} 

\newcommand\floor[1]{{\lfloor{#1}\rfloor}}
\newcommand\ceil[1]{{\lceil{#1}\rceil}}
\newcommand{\bigfloor}[1]{{\biggl\lfloor{#1}\biggr\rfloor}}

\newcommand{\Diag}{\mbox{Diag}}

\newcommand{\trace}{\mathrm{trace}}

\newcommand{\beq}{\begin{equation}}
\newcommand{\eeq}{\end{equation}}
\newcommand{\beann}{\begin{eqnarray*}}
\newcommand{\eeann}{\end{eqnarray*}}
\newcommand{\bc}{\begin{center}}
\newcommand{\ec}{\end{center}}

\theoremstyle{definition}
\newtheorem{theorem}{Theorem}
\newtheorem{observation}[theorem]{Observation}
\newtheorem{lemma}[theorem]{Lemma}
\newtheorem{claim}[theorem]{Claim}

\newtheorem{proposition}[theorem]{Proposition}

\newtheorem{conjecture}[theorem]{Conjecture}
\title{Improved lower bounds on book crossing numbers of complete graphs.}
\author{E.~de Klerk\thanks{Department of Econometrics and OR, Tilburg University, The Netherlands.}
  \and
D.V.~Pasechnik\thanks{School of Physical and Mathematical Sciences, Nanyang Technological University,
             Singapore. Supported by Singapore Ministry of Education ARF Tier 2 Grant MOE2011-T2-1-090.}
             \and
             G.~Salazar\thanks{Instituto de Fisica,
Universidad Autonoma de San Luis Potosi,
San Luis Potosi, SLP Mexico 78000. Supported by CONACYT Grant 106432.
}}


\begin{document}
\maketitle
\begin{abstract}
A {\em book with $k$ pages} consists of a straight line (the {\em
  spine}) and $k$ half-planes (the {\em pages}), such that the
boundary of each page is the spine. If a graph is drawn on a book
with $k$ pages in such a way that the vertices lie on the spine, and
each edge is contained in a page, the result is a {\em k-page book
  drawing} (or simply a {\em $k$-page drawing}). The $k$-{\em
  page crossing number} $\nu_k(G)$ of a graph $G$ is the minimum
number of crossings in a $k$-page drawing of $G$. In this paper
we investigate the $k$-page crossing numbers of  complete graphs. We use
semidefinite programming techniques to give improved lower bounds on
 $\nu_k(K_n)$ for various values of $k$. We also use a maximum
satisfiability reformulation to 
calculate the exact
value of $\nu_k(K_n)$ for several values of $k$ and $n$.
Finally, we investigate the best construction known for drawing $K_n$
in $k$ pages, calculate the resulting number of crossings, and discuss
this upper bound in the light of the new results reported in this paper.


\end{abstract}
{\bf Keywords:} $2$-page crossing number, book crossing number,
semidefinite pro\-gram\-ming, maximum $k$-cut, Frieze-Jerrum maximum-$k$-cut bound, maximum satisfiability problem

{\bf AMS Subject Classification:} 90C22, 90C25, 05C10, 05C62, 57M15, 68R10

\section{Introduction}\label{sec:introduction}

Motivated by applications to VLSI design, Chung, Leighton and
Rosenberg~\cite{leighton} studied embeddings of graphs in books. A
{\em book} consists of a line (the {\em spine}) and $k\ge 1$
half-planes (the {\em pages}), such that the boundary of each page is
the spine. In a {\em book embedding}, each edge is drawn on a single
page, and no edge crossings are allowed. In a {\em book drawing} (or
{\em $k$-page drawing}, if the book has $k$ pages), each
edge is drawn on a single page, but edge crossings are allowed.

Obviously every fixed graph can be embedded in a book with sufficiently many
pages. On the other hand, it is desirable to do so using as few pages
as possible. Given a graph $G$, the minimum $k$ such that $G$ can be
embedded in a $k$-page book is the {\em pagenumber} (or {\em book
  thickness}) of $G$~\cite{Bernhart-Kainen,leighton,Kainen}. Determining the pagenumber of an arbitrary graph is
NP-Complete~\cite{leighton}, but some results are known for particular
families of graphs. For instance, it is not
difficult to prove that the pagenumber of the complete graph $K_n$ is
$\lceil{n/2}\rceil$. On the other hand, with few exceptions, the
  pagenumbers of the complete bipartite graphs $K_{m,n}$ are unknown
  (see~\cite{Enomoto,muder}). Yannanakis proved~\cite{Yannanakis} that
  four pages are always sufficient, and sometimes required, to embed a
  planar graph.

\subsection{The $k$-page crossing number $\nu_k(G)$ of a graph $G$}

When the number $k$ of pages is fixed, the goal is to minimize the number
of crossings in a $k$-page drawing of an input graph. The {\em
  $k$-page crossing number} $\nu_k(G)$ of a graph $G$ is the minimum
number of crossings in a $k$-page drawing of $G$.

Clearly, a graph $G$ has $\nu_1(G) = 0$ if and only if it is
outerplanar. Equivalent to $1$-page drawings are {\em circular
  drawings}, in which the vertices are placed on a circle and
all edges are drawn in its interior.
In a similar vein, $k$-page drawings of $G = (V,E)$ can be
alternatively viewed as a set of $k$ circular drawings of graphs
$G^{(i)} = (V, E^{(i)})$ $(i=1,\ldots,k)$, where the edge sets
$E^{(i)}$ form a $k$-partition of $E$.  In other words, we assign each
edge in $E$ to exactly one of the $k$ circular drawings. In
Figure~\ref{fig:K7draw} we illustrate a $3$-page drawing of $K_7$.

\begin{figure}[ht!]
\begin{center}
\includegraphics[width=14cm]{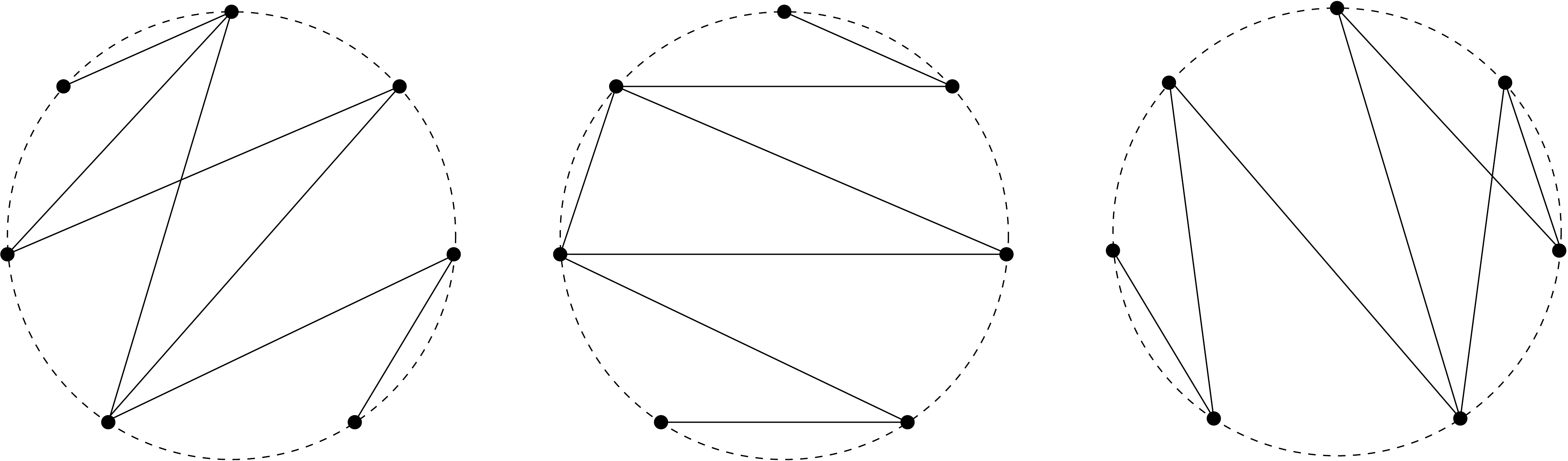}
\caption{\label{fig:K7draw} A $3$-page drawing of $K_7$ with $2$ crossings.}
\end{center}
\end{figure}

Several computational approaches and heuristics for estimating (or
determining) $\nu_1(G)$ and $\nu_2(G)$ have been devised (see for
instance~\cite{he1,he2,he5,masuda0,masuda}). Few exact results or
nontrivial bounds are known, and very little is known about $\nu_k(G)$
for $k > 2$.



Although the special cases $k=1$ and $2$ have received considerable
attention, the only thorough investigation of $\nu_k(G)$ for arbitrary
$k$ is the work by Shahrokhi, S\'ykora, Sz\'ekely, and
Vrt'o~\cite{sssv96}. In this paper, Shahrokhi et al.~give general
lower bounds for $\nu_k(G)$, for any graph $G$. They also give lower and upper bounds for
$\nu_k(K_n)$, and use their upper bounds for $\nu_k(K_n)$ to give
general upper bounds for $\nu_k(G)$ for arbitrary graphs $G$. 

As with every graph-theoretical parameter, there is a natural interest
in computing (or at least estimating) the $k$-page crossing number of
the complete graph $K_n$. Besides, estimates on $\nu_k(K_n)$ are an
essential tool to derive bounds for $\nu_k(G)$ for other graphs $G$,
via the embedding method. This is the approach followed by Sharokhi et
al.~in~\cite{sssv96}, where the constructions that yield their upper bounds for
$\nu_k(K_n)$ are used to generate $k$-page drawings of dense
graphs, whose number of crossings is within a constant factor of their
$k$-page crossing number.



\subsection{Structure of the rest of the paper}

Our main contributions in this paper are improved lower bounds for the
$k$-page crossing numbers of $K_n$. We also compute the exact value
of $\nu_k(K_n)$ for several $k$ and $n$ (no exact
values were previously known for any $n$, for any $k > 2$).

In Section~\ref{sec:surv1} we survey the bounds and exact results
known for $\nu_k(K_n)$.  In Section \ref{sec:maxcutformulation} we
show how $\nu_k(K_n)$ may be obtained from the solution of a
maximum-$k$-cut problem on a suitable graph, or via the solution of a
suitable weighted maximum satisfiability problem.  These
reformulations are used in Section \ref{sec:numerical results} to
obtain previously unknown exact values and improved lower bounds on $\nu_k(K_n)$ for various
values of $k$ and $n$ via computation.
In Section~\ref{sec:upperbounds} we review the construction that gives
the best upper bounds available for $\nu_k(K_n)$, calculate the resulting number of
crossings, and analyze this upper bound in the light of the new
results obtained in the previous sections. Finally, in
Section~\ref{sec:concludingremarks} we present some concluding remarks
and open questions.

\section{$k$-page drawings of $K_n$: exact results and bounds}\label{sec:surv1}

\subsection{$1$-page drawings of $K_n$}\label{sec:1pkn}

Calculating the $1$-page crossing number of $K_n$ is straightforward.
 Indeed, it is obvious that, in every $1$-page drawing of
$K_n$, every four vertices define a crossing, and therefore $\nu_1(K_n) \ge
\binom{n}{4}$. It is easy to give $1$-page drawings with exactly
$\binom{n}{4}$ crossings, and so the reverse inequality $\nu_1(K_n)
\le \binom{n}{4}$ follows.

It follows that the problem of calculating or estimating $\nu_k(K_n)$
is only of interest for $k\ge 2$.

\subsection{$2$-page drawings of $K_n$}

We recall that the {\em crossing number} $\Cr(G)$ of a graph $G$ is
the minimum number of crossings in a drawing of $G$ in the plane.
Harary and Hill~\cite{hararyhill} described how to draw $K_n$ in the
plane with $Z_2(n)$ crossings, where
\begin{equation}\label{eq:hill}
Z_2(n):= \frac{1}{4}\biggl\lfloor{\frac{n}{2}}
\biggr\rfloor
\biggl\lfloor{\frac{n-1}{2}}\biggr\rfloor
\biggl\lfloor{\frac{n-2}{2}}\biggr\rfloor
\biggl\lfloor{\frac{n-3}{2}}\biggl\rfloor.
\end{equation}

No drawings of $K_n$ with fewer than $Z_2(n)$ crossings are known, and
to this date the Harary-Hill Conjecture $\Cr(K_n)=Z_2(n)$ is still open
(it has been settled only for $n\le 12$; see~\cite{panrichter}).

The drawings given in~\cite{hararyhill} are not $2$-page drawings, but
it was later noticed that $2$-page drawings with $Z_2(n)$ crossings do
exist~\cite{bk} (see also~\cite{guy2,Harborth}). This observation gave rise
to the conjecture $\nu_2(K_n) = Z_2(n)$, popularized by Vrt'o~\cite{vrto2}.

Buchheim and Zheng~\cite{Buchheim-Zheng} proved that
$\nu_2(K_n)=Z_2(n)$ for $n \le 14$.
Recently, De Klerk and
Pasechnik~\cite{dp} verified that $\nu_2(K_n)=Z_2(n)$ for
$n\le 18$ and $n=20$ and $24$, and used semidefinite programming
techniques to give asymptotic estimates on $\nu_2(K_n)/Z_2(n)$.
More recently, \'Abrego et al.~proved that $\nu_2(K_n) = Z_2(n)$ for
every $n$~\cite{abregoetal}.

%
%
%
%

\subsection{$k$-page drawings of $K_n$ for $k\ge 3$: upper bounds}\label{sub:k3up}

Much less is known of the $k$-page crossing number $\nu_k(K_n)$ for $k>2$.

As we mentioned above, Bla\v{z}ek and Koman~\cite{bk} seemed to have
been the first to
construct $2$-page drawings of $K_n$ with $Z_2(n)$ crossings. In the
same paper they briefly observed that their 
construction could be extended to
$k$ pages. Although no details were given, they gave the 
number of crossings obtained for the case $k=3$ (see Section~\ref{sub:keq3}).

Damiani, D'Antona and Salemi proposed a way to draw $K_n$ on $k$
pages~\cite{DaDASa}, using the adjacency matrix representation (we
call this the {\em DDS construction}). 
They included a table
with the resulting number of crossings for $n\le 18$, and all $k\le
\ceil{n/2}$ (recall that $\nu_k(K_n)= 0$ if $k > \ceil{n/2}$).  The
exact number of crossings resulting from their construction was not
explicitly given (we have calculated this number; see
Proposition~\ref{pro:calcu}).
The
(table) results given in~\cite{DaDASa} coincide for the case $k=3$ with the expression given by
Bla\v{z}ek and Koman. Although Bla\v{z}ek and Koman did not explain in
detail their proposed construction for $k > 2$, it is not difficult to
fill out the details; by doing so, one can confirm that the (or, at
least, one possible) natural
way to generalize their construction for $k > 2$ yields precisely the DDS construction.

In~\cite{sssv96}, Shahrokhi et al.~described a construction that draws
$K_n$ on $k$ pages. This is also a natural generalization of the Bla\v{z}ek-Koman 
construction to $k>2$ pages. Moreover, it agrees with the DDS construction whenever $k$
divides $n$ (the DDS construction yields slightly better
results for other values of $k$ and $n$).
Based on their construction, Shahrokhi et al.~gave the following
general upper bound:
\begin{equation}
\label{eq:nuk upper bound}
\nu_k(K_n) \le
\frac{2}{k^2}\left(1-\frac{1}{2k}\right)\binom{n}{4} +
\frac{n^3}{2k}.
\end{equation}

In Section~\ref{sec:upperbounds} we include a detailed discussion on
the DDS construction, including the
calculation of the number of crossings that result by drawing $K_n$ on
$k$ pages using this paradigm.

\subsection{$k$-page drawings of $K_n$ for $k \ge 3$: lower bounds}

Shahrokhi et al.~proved in~\cite{sssv96} that for every graph $G$
and every positive integer $k$, one has $\nu_k(G) \ge m^3/37k^2n^2 -
27kn/37$. Following the derivation of this bound, it is easy to see
that the factor $1/37$ in this expression can be improved, but only
marginally so. Applying this bound to $K_n$, we obtain
$$
\nu_k(G) \ge \frac{n(n-1)^3}{296k^2} - \frac{27kn}{37}=
\frac{3}{37k^2}\binom{n}{4} + O(n^3).
$$

This lower bound can be improved if $n$ is sufficiently large compared
to $k$, as follows.
We recall that a $k$-{planar drawing} is similar to a $k$-page
drawing, but involves $k$ unrestricted planar drawings. Formally, let
$G=(V,E)$ be a graph. A $k$-{\em planar drawing} of $G$ is a set of $k$
planar drawings of graphs $G^{(i)} = (V, E^{(i)})$ $(i=1,\ldots,k)$, where the edge sets
$E^{(i)}$ form a $k$-partition of $E$. Loosely speaking, to obtain the
$k$-planar drawing, we take the
drawings of the graphs $G^{(i)}$, and (topologically) identify the $k$
copies of each vertex. The $k$-{\em planar crossing number}
$\Cr_k(G)$ of $G$ is the minimum number of crossings in a $k$-planar
drawing of $G$.

If $k$ is even, then it is easy to obtain, from a $k$-page drawing of
a graph $G$, a ${k/2}$-planar
drawing of $G$ with the same number of crossings. Therefore $\nu_k(G) \ge \Cr_{{k/2}}(G)$
for every graph $G$ and any positive even integer $k$. In~\cite{sssv07}, Shahrokhi et
al.~proved that for all $n \ge 2r^2 + 6r -1$ and all $r \ge 1$,
\begin{equation*}
\label{eq:crk lower bound}
\Cr_r(K_n) \ge \frac{1}{2(3r-1)^2}\binom{n}{4}.
\end{equation*}

From our previous observation, it follows that for all $n \ge 2(k/2)^2 +
6(k/2) -1 = k^2/2 + 3k - 1$ and all even $k \ge 2$, $\nu_k(K_n) \ge \frac{2}{(3k-2)^2}\binom{n}{4}$.
Obviously $\nu_{k-1}(G)\ge
\nu_{k}(G)$ for any graph $G$ and any integer $k\ge 2$, and so for any
odd $k \ge 3$ and any $n \ge (k-1)^2/2 + 3(k-1) - 1 = k^2/2 + 2k - 7/2$ we
have
$\nu_k(K_n) \ge \nu_{k+1}(K_n) \ge
\frac{2}{(3(k+1)-2)^2}\binom{n}{4} =
 \frac{2}{(3k+1)^2}\binom{n}{4}$.
Now in their exhaustive investigation of biplanar ($2$-planar) crossing
numbers~\cite{bip1,bip2}, Czabarka, S\'ykora, Sz\'ekely, and Vrt'o prove the slightly
better bound (for the $2$-planar, or {\em biplanar} crossing number) $\Cr_2(K_n) \ge n^4/952$. From this it follows that
$\nu_4(K_n) \ge n^4/952$.
Putting all these results
together, we obtain the following lower bounds:

\begin{equation}
\label{eq:nuk lower bound}
\nu_k(K_n) \ge
\begin{cases}
\frac{3}{119}\binom{n}{4} + O(n^3), & \text{if $k=4$;}
\\
\frac{2}{(3k-2)^2}\binom{n}{4}, & \text{if $k$ is even, $k > 4$, and
$n \ge k^2/2 + 3k -1$
;}
\\
\frac{2}{(3k+1)^2}\binom{n}{4}, & \text{if $k$ is odd, and $n \ge k^2+2k-7/2$.}
\end{cases}
\end{equation}

\subsection{$k$-page drawings of $K_n$: asymptotic lower and upper bounds}\label{sub:lus}

The following type of result is well-known and easily shown; see
e.g. \cite{Richter-Thomassen} or \cite[Theorem 2]{Scheinerman-Wilf}.
\begin{claim}\label{cla:claimA}
For any  integers $k >0$ and  $n > m \ge 4$,
\[
\frac{\nu_k(K_n)}{ \binom{n}{4}} \ge  \frac{\nu_k(K_m)}{\binom{m}{4}}.
\]
\end{claim}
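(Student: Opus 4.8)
The plan is to prove the inequality by a standard counting (averaging) argument, comparing an optimal $k$-page drawing of $K_n$ against the $k$-page drawings of $K_m$ obtained by restricting to $m$-element subsets of the vertices. The underlying principle is that every crossing of a drawing of $K_n$ is witnessed by exactly four vertices, so it survives in a fixed proportion of the induced subdrawings on $m$ vertices.

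First I would fix a $k$-page drawing $D$ of $K_n$ attaining the minimum, so $D$ has exactly $\nu_k(K_n)$ crossings. For each vertex subset $S$ with $|S| = m$, deleting the vertices outside $S$ together with their incident edges produces a drawing $D_S$ of $K_m$. Since the remaining vertices still lie on the spine and each remaining edge is still contained in one of the $k$ pages, $D_S$ is a legitimate $k$-page drawing of $K_m$, and hence has at least $\nu_k(K_m)$ crossings.

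Next I would set up the double count. In a book drawing two edges can cross only if they are drawn on the same page, and crossing edges necessarily have four distinct endpoints (two edges sharing a vertex do not cross), so every crossing of $D$ is charged to a unique $4$-element subset of vertices. Such a crossing survives in $D_S$ exactly when all four of its vertices lie in $S$, which occurs for precisely $\binom{n-4}{m-4}$ of the $m$-subsets. Summing over all $\binom{n}{m}$ choices of $S$ then gives, on one hand,
\[
\sum_{|S|=m} \Cr(D_S) = \binom{n-4}{m-4}\,\nu_k(K_n),
\]
and, using $\Cr(D_S)\ge \nu_k(K_m)$ for every $S$, on the other hand
\[
\sum_{|S|=m} \Cr(D_S) \ge \binom{n}{m}\,\nu_k(K_m),
\]
so that $\binom{n-4}{m-4}\,\nu_k(K_n) \ge \binom{n}{m}\,\nu_k(K_m)$.

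Finally I would simplify the binomial coefficients via the identity $\binom{n}{m}/\binom{n-4}{m-4} = \binom{n}{4}/\binom{m}{4}$, both sides equalling $n(n-1)(n-2)(n-3)/[m(m-1)(m-2)(m-3)]$; this is where the hypotheses $n > m \ge 4$ are used, ensuring all the coefficients are well defined. Rearranging converts the last inequality into $\nu_k(K_n)/\binom{n}{4} \ge \nu_k(K_m)/\binom{m}{4}$, which is the claim. The only point demanding real care is the double-counting step: one must verify that each crossing of $D$ is associated with exactly one $4$-subset and therefore with exactly $\binom{n-4}{m-4}$ of the restricted drawings. For book drawings this is immediate, since crossings occur only within a single page and only between edges with four pairwise distinct endpoints, so I do not expect any genuine obstacle in the argument.
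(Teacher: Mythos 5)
Your proof is correct and is precisely the standard counting (averaging) argument that the paper invokes: the paper gives no proof of its own, citing instead the well-known argument of Richter--Thomassen and Scheinerman--Wilf, which is exactly what you carry out (and which the paper itself reuses in its proof of Proposition~\ref{pro:oddeven}). All steps, including the key observation that each crossing lives on a single page and involves four distinct vertices, and the binomial identity $\binom{n}{m}\binom{m}{4}=\binom{n}{4}\binom{n-4}{m-4}$, are sound.
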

As a consequence, the sequence $\frac{\nu_k(K_n)}{ \binom{n}{4}}$ is
monotonically non-decreasing in $n$.  Since it is also bounded from above by
\eqref{eq:nuk upper bound}, the limit exists, and using \eqref{eq:nuk
  upper bound} and \eqref{eq:nuk lower bound} one has:
\begin{equation}\label{eq:forfour}
\frac{3}{119} \le \lim_{n \rightarrow \infty} \frac{\nu_4(K_n)}{
  \binom{n}{4}} \le \frac{7}{64};
\end{equation}

\begin{equation}
\label{eq:foreven}
\frac{2}{(3k-2)^2} \le \lim_{n \rightarrow \infty} \frac{\nu_k(K_n)}{
  \binom{n}{4}} \le \frac{2}{k^2}\left(1-\frac{1}{2k}\right), \text{
  if $k$ is even, } k > 4.
\end{equation}

\begin{equation}\label{eq:forodd}
\frac{2}{(3k+1)^2} \le \lim_{n \rightarrow \infty} \frac{\nu_k(K_n)}{
  \binom{n}{4}} \le \frac{2}{k^2}\left(1-\frac{1}{2k}\right), \text{
  if $k$ is odd, } k \ge 3.
\end{equation}

\section{Formulating $\nu_k(K_{n})$ as a maximum $k$-cut \\
  or maximum satisfiability problem}\label{sec:maxcutformulation}

We will show that $\nu_k(K_{n})$ can be obtained by computing the maximum $k$-cut size
in a certain graph $G_n=(V_n,E_n)$, say, which is a certain subgraph of the complement of the line graph of $K_n$.
The same graph was used in \cite{dp} to investigate $\nu_2(K_n)$, and the general construction of graphs of this type
is due to Buchheim and Zheng \cite{Buchheim-Zheng}.

To define the graph $G_n=(V_n,E_n)$, we consider a Hamiltonian cycle $C_n$ with vertices $v_1, v_2, \ldots, v_n$. Let $V_n$
be the set of {\em chords} of the cycle, that is, the edges $v_iv_j$ with
$v_i$ and $v_j$ at cyclic distance at least $2$. Let us say that
the chords $v_iv_j$ and $v_kv_\ell$ {\em overlap} if $i,k,j,\ell$
occur in this cyclic order as we traverse $C_n$, either in its natural or in
its reverse direction.
Finally, to define $E_n$,
we let two chords $v_iv_j$ and $v_kv_\ell$ be adjacent if they
overlap.

%

Thus one has $|V_n| = \binom{n}{2} - n$, and it is easy to verify
that $|E_n| = \binom{n}{4}$.  The automorphism group of $G_n$ is
isomorphic to the dihedral group $D_n$ on $n$ elements, and there are
$d-1$ orbits of vertices, where $d = \floor{n/2}$. The equivalency
classes of vertices (i.e.\ orbits) may be described as follows: since
vertices correspond to chords in $C_n$, the chords that connect
vertices of $C_n$ at the same cyclic distance belong to the same
equivalency class.  If $n$ is odd, then
the vertices corresponding to chords with cyclic
distance $i$ have valency
$i(i-1) + 2(i-1)(d-i)$, as is easy to check.

For later use, we will label the vertices of $G_n$ so that its adjacency matrix is partitioned into symmetric circulant blocks.
To this end, consider the cycle $C_n$ with vertices numbered $\{0,1,\ldots,n-1\}$ in the usual way.
The vertices of $G_n$ that correspond to chords connecting points at cyclic distance $i$ are now
labeled successively, starting with the chord
\[
\left\{\lfloor n/2 \rfloor \times i \mod n,\left(\lfloor n/2 \rfloor +1\right)\times i \mod n\right\},
\]
and then obtaining the next chords in the ordering via  clockwise cyclic shifts.

Thus the adjacency matrix of $G_n$ is partitioned into a block structure, where
 each row of blocks is indexed by a cyclic distance $i \in \{2,\ldots,d\}$,
 and each block has size $n\times n$.

Moreover, one may readily verify that block $(i,j)$ ($i,j \in \{2,\ldots,d\}, \; i \le j$) is given by the {symmetric} $n \times n$ circulant matrix with first row

\begin{equation}
\label{eq:first row}
[0 \; \mathbf{0}_{\ell_{ij}}^T \; \mathbf{1}_{i-1}^T \; \mathbf{0}_{n-2(i-1)-1-2\ell_{ij}}^T \; \mathbf{1}_{i-1}^T \; \mathbf{0}_{\ell_{ij}}^T],
\end{equation}
where $\mathbf{1}_k$ and $\mathbf{0}_k$ denote the all-ones
and all-zeroes vectors in $\mathbb{R}^k$, respectively, and
\begin{equation}
\label{def:ell_ij}
\ell_{ij} = \left\{ \begin{array}{rl}
d(i-j) \mod n & \mbox{\rm if $i$ and $j$ have the same parity} \\
d(i-j)-j \mod n & \mbox{\rm otherwise.}
\end{array} \right.
\end{equation}
We may now relate the maximum $k$-cut problem for $G_n$ to $\nu_k(K_n)$.

\begin{lemma}\label{lem:reform}
One has
\[
\nu_k(K_{n}) = |E_n| - \mbox{\rm max-$k$-cut}(G_n),
\]
where $\mbox{\rm max-$k$-cut}(G_n)$ denotes the cardinality of a maximum $k$-cut in $G_n$.
\end{lemma}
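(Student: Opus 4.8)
The plan is to establish a bijection between $k$-page drawings of $K_n$ (with a fixed vertex ordering) and $k$-colorings of the vertices of $G_n$, under which the number of crossings in the drawing corresponds exactly to the number of \emph{monochromatic} edges of $G_n$. Recall that a $k$-page drawing of $K_n$ is determined by placing the $n$ vertices on the spine in some order and assigning each edge to one of the $k$ pages. By the symmetry of $K_n$ we may fix the cyclic vertex ordering to be $v_1, v_2, \ldots, v_n$ as in the Hamiltonian cycle $C_n$; an optimal $k$-page drawing is always realized by some ordering, but since we are comparing against all orderings on the same cycle, the first step is to argue that $\nu_k(K_n)$ equals the minimum, over all page-assignments of the chords, of the number of crossings in this fixed circular layout. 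The $n$ \emph{cycle} edges $v_iv_{i+1}$ never cross anything and can be placed on any page, so they contribute nothing; only the chords, which are exactly the vertices of $G_n$, matter.

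The key combinatorial observation is the crossing criterion: two chords $v_iv_j$ and $v_kv_\ell$ placed on the \emph{same} page cross if and only if they overlap, i.e.\ $i,k,j,\ell$ occur in this cyclic order along $C_n$. This is precisely the adjacency relation defining $E_n$. Two chords assigned to different pages can never cross, since each page is a half-plane and edges in distinct half-planes only meet on the spine. First I would verify this standard fact carefully: within a single page, two chords cross exactly when their endpoints interleave on the spine, which is the overlap condition. Consequently, a page-assignment is exactly a function $c : V_n \to \{1,\ldots,k\}$ (assign chord/vertex to the page given by its color), and the number of crossings of the resulting drawing equals the number of edges $\{u,w\} \in E_n$ with $c(u) = c(w)$, that is, the number of monochromatic edges of $G_n$ under the coloring $c$.

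Given this correspondence, the counting is immediate. The total number of edges of $G_n$ is $|E_n| = \binom{n}{4}$ (as already verified in the excerpt). For any $k$-coloring $c$, the edges split into monochromatic and bichromatic ones, the latter being exactly the edges cut by the partition induced by $c$. Hence
\[
\#\{\text{crossings from } c\} = |E_n| - \#\{\text{edges cut by } c\}.
\]
Minimizing the left-hand side over all page-assignments gives $\nu_k(K_n)$, which by the displayed identity is the same as $|E_n|$ minus the \emph{maximum} number of cut edges over all $k$-partitions of $V_n$, i.e.\ $|E_n| - \mbox{\rm max-$k$-cut}(G_n)$. This yields the claimed equality.

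The main obstacle is not the counting but justifying the reduction to a single fixed circular layout, together with the precise ``same-page-crossing iff overlap'' lemma. One must be careful that restricting attention to the canonical cyclic order $v_1,\ldots,v_n$ does not lose optimality: any $k$-page drawing uses \emph{some} ordering, and relabeling the vertices by that ordering turns it into a drawing on our fixed cycle with the same crossing count, so the minimum is unaffected. The only genuinely technical point is confirming that on a single page no two edges sharing an endpoint cross and that two disjoint chords cross precisely when they interleave — a routine but essential planarity argument that I would state as the crossing criterion before invoking it.
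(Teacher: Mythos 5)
Your proof is correct and takes essentially the same route as the paper's: assign color $i$ to the chords placed on page $i$, observe that crossings in the (fixed-layout) drawing correspond exactly to monochromatic edges of $G_n$, and conclude that minimizing crossings equals $|E_n|$ minus the maximum $k$-cut. You additionally spell out two points the paper leaves implicit --- that restricting to a fixed spine ordering loses no optimality, and the same-page-crossing-iff-overlap criterion --- which only makes the argument more complete.
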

\proof
First of all, recall that the maximum $k$-cut problem for $G = (V,E)$ may be seen as a vertex coloring problem,
where the vertices $V$ are colored with $k$ colors in such a way that the number of edges with differently colored endpoints
is maximized.
Consider a fixed $k$-page drawing of $K_n$, viewed as $k$ circular drawings. Fix a set of $k$ colors.
Assign the edges on page $i$ of the drawing the $i$th color $(1 \le i \le k)$.
This defines a $k$-partition (or $k$-coloring) of the vertices $V_n$ of $G_n$. Moreover, the number of edges in $E_n$ with endpoints of
the same color equals the number of crossings in the drawing, by construction. \qed

As a consequence of this lemma, one may calculate $\nu_k(K_{n})$ for fixed
(in practice, sufficiently small) values of $n$ by solving a maximum cut problem.  This was done
by Buchheim and Zheng \cite{Buchheim-Zheng} for $k=2$ and  $n \le 13$, by solving
the maximum cut problem with a branch-and-bound algorithm.  Using the
{\tt BiqMac} solver~\cite{BiqMac}, De Klerk and Pasechnik \cite{dp} computed the exact value of $\Crt(K_n)$ for
$n\le 18$ and for $n \in \{20,24\}$.

\subsection{The Frieze-Jerrum max-$k$-cut bound}
\label{sec:GW}

We follow the standard practice to use
$\mathbb{R}^{p\times q}$
(respectively, $\mathbb{C}^{p\times q}$)
to denote the space of $p\times q$ matrices over $\mathbb{R}$
(respectively, $\mathbb{C}$).
For $\vector{A} \in \mathbb{R}^{p \times p}$, the notation
$\vector{A} \succeq 0$ means that $\vector{A}$ is symmetric positive semidefinite,
whereas for
$\vector{A} \in \mathbb{C}^{p\times p}$, it means that $\vector{A}$ is Hermitian positive semidefinite.

Let $G$ be a graph with $p$ vertices,
and let $\vector{L}$ be
its Laplacian matrix.
Frieze and Jerrum introduced the following semidefinite programming-based upper bound
on $\mbox{max-$k$-cut}(G)$:
\begin{equation}\label{eq:gw}
\mathcal{FJ}_k(G) :=  \max \left\{
\frac{k-1}{k}\trace(\vector{L}\vector{X}) \; \biggl| \; \vector{X}
\succeq 0,\ X_{ii} = 1 \; (1 \le i \le p), \; \vector{X} \ge \frac{-1}{k-1}\vector{J} \right\},
\end{equation}
where $\vector{J}$ denotes the all-ones matrix of order $p$.

For $k=2$ this bound coincides with the maximum-cut bound of Goemans and Williamson \cite{goe95}.

The associated dual semidefinite program takes the form:
\begin{equation}
\mathcal{FJ}_k(G) = \min_{\vector{w} \in \mathbb{R}^{p}, \vector{S} \ge 0} \left\{ \sum_{i=1}^p w_i + \frac{1}{k-1}\trace(\vector{J}\vector{S})\; \biggl| \; \Diag(\vector{w}) - \frac{k-1}{2k}\vector{L}-\vector{S} \succeq 0\right\},
\label{dual GW}
\end{equation}
where ${\rm Diag}$ is the operator that maps a $p$-vector to a $p\times p$ diagonal
matrix in the obvious way.

\subsection{The Frieze-Jerrum bound for $G_n$}
Using the technique of symmetry reduction for semidefinite programming (see e.g.\ \cite{GaPa}), one can simplify the
dual  problem (\ref{dual GW}) for the graphs $G_n$ defined in Section~\ref{sec:maxcutformulation}, by using the dihedral automorphism group of $G_n$.
We state the final expression as the following lemma.
The proof is very similar to that of \cite[Lemma 4]{dp}, and we therefore only give an outline.

\begin{lemma}
\label{lemma:reformulation SDP}
Let $n>0$ be an odd integer and $d = \lfloor n/2 \rfloor$. One has
\[
\mathcal{FJ}_k(G_n) = \min_{{y} \in \mathbb{R}^{d-1}} n\sum_{i=2}^d y_i + \frac{n}{k-1}\trace (JX^{(0)})
\]
subject to
\begin{equation}
\label{eq:lhs final lmi}
 \mbox{\rm Diag}\left({y} -  \frac{k-1}{2k} {val}\right) + \Lambda^{(m)} \succeq 0 \; (0\le m \le d),
\end{equation}
where
\begin{eqnarray*}
val_i & = & i(i-1) + 2(i-1)(d-i), \quad 2\le i \le d,  \nonumber \\
\Lambda^{(m)}_{ij}  &=&   \frac{k-1}{k}\sum_{t=\ell_{ij}+1}^{\ell_{ij}+i} e^{\frac{-2\pi mt\sqrt{-1}}{n}}  -X^{(0)}_{ij} - 2\sum_{t=1}^d X^{(t)}_{ij}e^{\frac{-2\pi mt\sqrt{-1}}{n}} \;\;\; {2 \le i\le j \le d}, \label{GWconstraints} \\
\ell_{ij} &=& \left\{ \begin{array}{rl}
d(i-j) \mod n & \mbox{\rm if $i$ and $j$ have the same parity} \\
d(i-j)-j \mod n & \mbox{\rm otherwise}
\end{array} \right. \\
X^{(m)} &=& (X^{(m)})^T \ge  0, \;\;\;  \text{\rm for all } 0 \le m \le d. \nonumber \\
\end{eqnarray*}
\end{lemma}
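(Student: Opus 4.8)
The plan is to apply the general machinery of symmetry reduction for semidefinite programming to the dual Frieze–Jerrum program \eqref{dual GW} for the specific graph $G_n$, exploiting the dihedral automorphism group $D_n$ that acts on $G_n$. The starting point is the observation, already recorded in the excerpt, that the adjacency matrix of $G_n$ is partitioned into symmetric circulant blocks indexed by cyclic distances $i,j\in\{2,\ldots,d\}$, with block $(i,j)$ given by \eqref{eq:first row}–\eqref{def:ell_ij}. Because the feasible set and objective of \eqref{dual GW} are invariant under the induced action of $D_n$, a standard averaging argument shows there is an optimal solution $(\vector{w},\vector{S})$ that is itself $D_n$-invariant. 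First I would record this invariance explicitly: the dual slack matrix $\Diag(\vector{w})-\frac{k-1}{2k}\vector{L}-\vector{S}$ may be assumed to commute with the permutation representation of $D_n$, and similarly $\vector{S}$ and $\vector{w}$ inherit the block-circulant structure. In particular $\vector{w}$ is constant on each of the $d-1$ vertex orbits, which is what reduces it to a vector $y\in\mathbb{R}^{d-1}$ indexed by cyclic distance, and $\vector{S}$ decomposes into symmetric circulant blocks $X^{(\cdot)}$, one family per block position $(i,j)$.

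Next I would block-diagonalize the $D_n$-invariant matrices by passing to the common eigenbasis of the circulant blocks, namely the discrete Fourier basis. Since each block is an $n\times n$ symmetric circulant, the Fourier transform simultaneously diagonalizes every block within a fixed $(i,j)$ position; collecting the $m$th Fourier coefficient across all block positions yields, for each frequency $m$ with $0\le m\le d$, a small $(d-1)\times(d-1)$ matrix. The semidefiniteness constraint on the large slack matrix then decouples into the $d+1$ small linear matrix inequalities indexed by $m$. Computing the Fourier transform of the first row \eqref{eq:first row} produces the exponential sum $\frac{k-1}{k}\sum_{t=\ell_{ij}+1}^{\ell_{ij}+i} e^{-2\pi m t\sqrt{-1}/n}$ appearing in the definition of $\Lambda^{(m)}_{ij}$, while the Fourier coefficients of the circulant blocks of $\vector{S}$ give the terms $-X^{(0)}_{ij}-2\sum_{t=1}^d X^{(t)}_{ij}e^{-2\pi m t\sqrt{-1}/n}$; here the factor $2$ and the range of $t$ reflect the symmetry of the circulant, which forces $X^{(t)}$ and $X^{(n-t)}$ to coincide, so only frequencies up to $d$ are independent. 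The diagonal contribution of $\Diag(y-\frac{k-1}{2k}\,val)$ comes directly from $\vector{w}$ together with the orbit valencies $val_i=i(i-1)+2(i-1)(d-i)$ that describe the diagonal of $\frac{k-1}{2k}\vector{L}$. Finally I would rewrite the dual objective: the term $\sum_i w_i$ becomes $n\sum_{i=2}^d y_i$ because each orbit has $n$ vertices, and $\frac{1}{k-1}\trace(\vector{J}\vector{S})$ collapses to $\frac{n}{k-1}\trace(JX^{(0)})$ since only the all-ones ($m=0$) Fourier component of $\vector{S}$ survives against $\vector{J}$.

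The main obstacle, and the step requiring genuine bookkeeping rather than routine manipulation, is the precise identification of the phase offsets $\ell_{ij}$ in the Fourier sum. The labeling convention for the vertices of $G_n$ — chords listed starting from $\{\lfloor n/2\rfloor\times i \bmod n,(\lfloor n/2\rfloor+1)\times i\bmod n\}$ and then cyclically shifted — was chosen specifically so that the blocks are circulant, but the price is that the nonzero pattern in each block is cyclically rotated by an amount depending on both $i$ and $j$ and on their parities. Verifying that this rotation is exactly $\ell_{ij}$ as given in \eqref{def:ell_ij}, in both the same-parity and opposite-parity cases, is the delicate combinatorial computation; once the block structure is confirmed to match \eqref{eq:first row}, the rest of the reduction is the formal application of symmetry-reduced SDP, essentially identical to \cite[Lemma 4]{dp} with $k=2$ replaced by general $k$ and the Goemans–Williamson constraint replaced by the Frieze–Jerrum constraint. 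For that reason I would present the offset computation and block-circulant verification in detail (or by reference to the construction in Section~\ref{sec:maxcutformulation}) and merely outline the Fourier block-diagonalization, as the lemma statement promises.
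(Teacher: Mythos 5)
Your proposal follows essentially the same route as the paper's proof: average an optimal dual pair over $\mbox{Aut}(G_n)$ (the paper does this via the Reynolds projection operator) so that $\vector{w}$ collapses to orbit values $y_i$ and $\vector{S}$ to a block matrix with symmetric circulant blocks, then conjugate by $\vector{I}_{d-1}\otimes\vector{Q}$ to diagonalize the circulant blocks and reorder into $d+1$ distinct $(d-1)\times(d-1)$ linear matrix inequalities, identifying the Fourier sums over \eqref{eq:first row} and the orbit valencies exactly as you describe. The only presentational difference is that the paper treats the block-circulant structure and the offsets $\ell_{ij}$ as already established in Section~\ref{sec:maxcutformulation} (so its proof is purely the symmetry-reduction and Fourier step), whereas you fold that verification into the proof itself; the mathematical content is the same.
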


\proof

Assume that $\vector{w},\vector{S}$ are optimal in \eqref{dual GW} for $G = G_n$ and denote the Laplacian matrix 
of $G_n$ by $\vector{L}$.
We now project
the posititive semidefinite matrix
\[
\Diag(\vector{w}) - \frac{k-1}{2k}\vector{L}-\vector{S} \succeq 0
\]
 onto the centralizer ring of $\mbox{Aut}(G_n)$, via the Reynolds projection operator (or group average), say $\mathcal{R}_{G_n}$:
 \[
 \mathcal{R}_{G_n}(X) := \frac{1}{|\mbox{Aut}(G_n)|} \sum_{P \in \mbox{Aut}(G_n)} P^T X P \quad \quad (X \in \mathbb{R}^{|V_n| \times |V_n|}),
 \]
 where the matrices $P$ are given by the permutation matrix representation of $\mbox{Aut}(G_n)$.
Note that this projection preserves positive semidefiniteness as well as entrywise nonnegativity.
Moreover, as explained in Section \ref{sec:maxcutformulation}, we may assume that 
 $ \mathcal{R}_{G_n}(X)$ is a block matrix consisting of symmetric circulant blocks of order $n$.

Also note that the projection $\mathcal{R}_{G_n}(\Diag(\vector{w}))$ simply averages the components of $\vector{w}$ 
 over the $d-1$ orbits of  $\mbox{Aut$(G_n)$}$.
Denoting the average of the $\vector{w}$ components in orbit $i$ by $y_i$ ($2 \le i \le d$),
 and  $\vector{Z} = \mathcal{R}_{G_n}(\vector{S}) \ge 0$,
we obtain
\begin{equation}
\label{lmi}
\Diag (\vector{y} \otimes \vector{1}_{n}) - \frac{k-1}{2k}\vector{L} - \vector{Z} \succeq 0,
\end{equation}
since $\mathcal{R}_{G_n}(\vector{L}) = \vector{L}$.

Thus we have obtained the reformulation
\[
\mathcal{FJ}_k(G_n) = \min_{{y} \in \mathbb{R}^{d-1},\vector{0} \le  \vector{Z} \in {\mathcal{A}}} \left\{ n\sum_{i=2}^d y_i + \frac{1}{k-1}\trace (\vector{J}\vector{Z}) \; | \; \mbox{s.t. } \eqref{lmi}\right\},
\]
where $\mathcal{A} \subset \mathbb{R}^{|V_n| \times |V_n|}$ denotes the centralizer ring of $\mbox{Aut}(G_n)$, i.e.\ the matrix $*$-algebra
consisting of matrices of order $|V_n|$ that are partitioned into symmetric circulant blocks of order $n$. 

We may now reduce this formulation further by using the discrete Fourier transform matrix to
simultaneously diagonalize the circulant blocks of $\vector{Z}$ and $\vector{L}$.

To this end, let $\vector{Q}$ denote the (unitary) discrete Fourier transform matrix of order $n$.
Condition (\ref{lmi}) is equivalent to
\begin{equation}
\label{lmi2}
(\vector{I}_{d-1} \otimes \vector{Q}) \left(\Diag (\vector{y} \otimes \vector{1}_{n})- \frac{k-1}{2k}\vector{L}- \vector{Z}\right)(\vector{I}_{d-1} \otimes \vector{Q})^* \succeq 0.
\end{equation}
Since the unitary transform involving $\vector{Q}$ diagonalizes any circulant matrix (see e.g.\ \cite{circulant matrices}),
the matrix
$(\vector{I}_{d-1} \otimes \vector{Q})\vector{L}(\vector{I}_{d-1} \otimes \vector{Q})^*$ becomes a block matrix where each $n\times n$ block is diagonal,
with diagonal entries of block $(i,j)$ given
by the eigenvalues of the circulant matrix with first row given by

\[
\left\{
\begin{array}{rll}
& [0 \; \mathbf{0}_{\ell_{ij}}^T \; -\mathbf{1}_{i-1}^T \; \mathbf{0}_{n-2(i-1)-1-2\ell_{ij}}^T \; -\mathbf{1}_{i-1}^T \; \mathbf{0}_{\ell_{ij}}^T] &\mbox{if $i \neq j$} \\
&   [val_i  \; -\mathbf{1}_{i-1}^T \; \mathbf{0}_{n-2(i-1)-1}^T \; -\mathbf{1}_{i-1}^T] & \mbox{if $i = j$}, \\
\end{array}
\right.
\]

due to
 \eqref{eq:first row}. Also, clearly
one has
\[
(\vector{I}_{d-1} \otimes \vector{Q}) \left(\Diag (\vector{y} \otimes \vector{1}_{n}) \right)(\vector{I}_{d-1} \otimes \vector{Q})^* = \Diag (\vector{y} \otimes \vector{1}_{n}).
\]

Finally, the rows and columns of the left hand side of (\ref{lmi2}) may now be re-ordered to form a block diagonal
matrix with $n$ diagonal blocks, each of size $d-1 \times d-1$. Only $d+1$ of these $n$ blocks are distinct, and these correspond to the
left-hand-side matrices in \eqref{eq:lhs final lmi}. The matrices $\Lambda^{(i)}$ $(0\le i \le d)$ in \eqref{eq:lhs final lmi}
correspond to the distinct blocks obtained from
the reordering of
\[
-(\vector{I}_{d-1} \otimes \vector{Q}) \vector{Z}(\vector{I}_{d-1} \otimes \vector{Q})^*
\]
into block-diagonal form. In particular, we use $X^{(m)}_{ij}$ to denote element $m$ of the first row of the symmetric circulant
block $(i,j)$ of $\vector{Z}$.
\qed

A few remarks on Lemma \ref{lemma:reformulation SDP}:
\begin{enumerate}
\item
Note that we obtain a reduced semidefinite program  with $d = \lfloor n/2 \rfloor$ linear matrix inequalities
involving matrices of order $d-1$, as well as $d+1$ nonnegative matrix variables of order $d-1$.
This should be compared to the original formulation \eqref{dual GW} to obtain $\mathcal{FJ}_k(G_n)$,
 that involved a linear matrix inequality of order $n(d-1)$, as well
as a nonnegative matrix variable of the same order. 
\item
Lemma \ref{lemma:reformulation SDP} generalizes \cite[Lemma 4]{dp} to include the case $k > 2$, but also refines it in the sense
that the dihedral symmetry of the graph $G_n$ is fully exploited. Indeed in \cite[Lemma 4]{dp}, only the cyclic
part of $\mbox{Aut}(G_n)$ was used, leading to (complex) Hermitian linear matrix inequalities, as opposed to the
real symmetric linear matrix inequalities of Lemma \ref{lemma:reformulation SDP}.
\item
The computation of $\mathcal{FJ}_k(G_n)$ is simpler in the case $k=2$, since the bound then becomes the Goemans-Williamson maximum cut bound.
Indeed, in \cite{dp}, values of $\mathcal{FJ}_2(G_n)$ were reported for $n$ close to $1,000$.
For $k>2$, one is limited to more modest values: the largest value of $n$ for which we will report computational results will be $n=69$; see Section \ref{sec:numerical results}.
The difference in size of $n$ that may be handled is primarily due to the nonnegative matrix variables $X^{(m)}$ ($0 \le m \le d$). These variables may be eliminated
if $k=2$, but not if $k>2$.
\end{enumerate}

\subsection{A maximum satisfiability reformulation}
It is well-known that the maximum $k$-cut problem may be reformulated as a maximum satisfiability problem,
and we will use this reformulation later on for computational purposes.

Consider a graph $G=(V,E)$ and a set of $k$ colors (used to color the vertices $V$). We define the following logical variables:
\[
x_i^{j} = \left\{\begin{array}{ll}  \mbox{{ \textsc{true}}} & \mbox{ if vertex $i$ has color $j$}
\\
\mbox{{\textsc{false}}} & \mbox{ otherwise}. \end{array} \right.
\]

Consider the clause:
\begin{equation} \label{2-clauses}
\neg x_i^{p} \vee \neg x_j^{p} \mbox{ if $(i,j) \in E$}
\end{equation}
for
each color $p= 1,\ldots,k$.
For a given edge, and a given color, this clause is satisfied if and only if
the endpoints of the edge are not both colored using this color.

Moreover, each vertex should be
assigned a color:
\begin{equation}
  x_i^{1} \vee \ldots \vee x_i^{k} \;\;\; (i \in V). \label{long clauses}
\end{equation}

In order to obtain the maximum $k$-cut in $G$, we therefore need values of the logical variables that
satisfy all the clauses \eqref{long clauses}, and as many of the clauses \eqref{2-clauses} as possible.
This may be done by solving a weighted maximum satisfiability problem, where the weights of the
satisfied clauses is maximized.
In order to guarantee that the clauses \eqref{long clauses} are all satisfied, we assign these clauses weight $k|E|$, while
the clauses \eqref{2-clauses} are assigned weight $1$.

Thus the cardinality of a maximum $k$-cut in $G = (V,E)$ coincides with the maximum weight of
satisfied clauses in a truth assignment for the weighted
logical formula:
\begin{eqnarray}
&&\neg x_i^{p} \vee \neg x_j^{p} \quad \quad ((i,j) \in E, \; 1 \le p \le k)  \nonumber \\
&& k|E|\left(x_i^{1} \vee \ldots \vee x_i^{k}\right) \quad \quad (i \in V). \label{clauses}
\end{eqnarray}

We may now apply this idea to obtain $\nu_k(K_n)$ as follows.
\begin{lemma}
\label{lemma:maxsat}
Consider  the set of  weighted clauses \eqref{clauses} for the graph $G = G_n = (V_n,E_n)$.
Then $\nu_k(K_n)$ is the minimum weight of the unsatisfied clauses, taken over all possible truth assignments.
\end{lemma}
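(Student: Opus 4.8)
The plan is to compute the minimum unsatisfied weight directly and identify it with $|E_n| - \mbox{\rm max-$k$-cut}(G_n)$, after which Lemma~\ref{lem:reform} finishes the job. To set up the bookkeeping, I would attach to each truth assignment a color assignment: for every vertex $i \in V_n$ let $S_i \subseteq \{1,\ldots,k\}$ be the set of colors $j$ with $x_i^j$ set to \textsc{true}. The unsatisfied clauses then fall into two families. A long clause \eqref{long clauses} for vertex $i$ fails exactly when $S_i = \emptyset$, contributing weight $k|E_n|$; a short clause \eqref{2-clauses} for edge $(i,j)$ and color $p$ fails exactly when both endpoints carry color $p$, i.e.\ when $p \in S_i \cap S_j$, contributing weight $1$. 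Hence the total unsatisfied weight of a given assignment is
\[
k|E_n|\cdot\bigl|\{i \in V_n : S_i = \emptyset\}\bigr| \;+\; \sum_{(i,j)\in E_n} |S_i \cap S_j|.
\]

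The key step is to show that the minimum of this quantity is attained by an honest $k$-coloring, i.e.\ by an assignment with $|S_i| = 1$ for every $i$. For the lower bound $S_i \neq \emptyset$, I would use the large weight on \eqref{long clauses}: replacing an empty $S_i$ by an arbitrary singleton removes the penalty $k|E_n|$ while creating at most $\deg(i) \le |E_n| \le k|E_n|$ newly unsatisfied short clauses, so the total weight does not increase. For the upper bound $|S_i| \le 1$, I would note that deleting a color from an $S_i$ with $|S_i| \ge 2$ (keeping it nonempty) leaves every long clause satisfied and can only shrink each intersection $|S_i \cap S_j|$, hence never increases the unsatisfied weight. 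Iterating these two reductions yields an optimal assignment in which every $S_i$ is a singleton, that is, a genuine assignment of one color per vertex.

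For such a coloring no long clause is unsatisfied, and the unsatisfied short clauses are precisely those indexed by a monochromatic edge together with its common color; thus the unsatisfied weight equals the number of monochromatic edges, namely $|E_n|$ minus the size of the cut induced by the coloring. Minimizing over all colorings, the minimum unsatisfied weight equals $|E_n| - \mbox{\rm max-$k$-cut}(G_n)$, which by Lemma~\ref{lem:reform} is exactly $\nu_k(K_n)$.

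I expect the only genuine subtlety to be this reduction from arbitrary truth assignments to proper colorings — in particular, verifying that the weight $k|E_n|$ placed on the clauses \eqref{long clauses} is large enough that no optimal assignment can afford an empty color set, and that redundant colors can always be discarded without penalty. Once every vertex is forced to carry exactly one color, the remaining identification of unsatisfied weight with $|E_n| - \mbox{\rm max-$k$-cut}(G_n)$ is immediate, and the conclusion follows from the already-established Lemma~\ref{lem:reform}.
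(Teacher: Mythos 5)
Your proposal is correct and takes essentially the same route as the paper: identify the minimum unsatisfied weight with $|E_n| - \mbox{\rm max-$k$-cut}(G_n)$ and invoke Lemma~\ref{lem:reform}. The paper's own proof is a one-line appeal to Lemma~\ref{lem:reform} (leaning on the informal discussion of the weight $k|E_n|$ preceding the lemma), so your explicit normalization of an arbitrary truth assignment to a proper coloring merely spells out details the paper leaves implicit.
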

\proof
The proof follows directly from Lemma \ref{lem:reform}. \qed

\section{Numerical results}
\label{sec:numerical results}

\subsection{Exact computations}\label{sub:exact}
It is possible to compute $\nu_k(K_n)$ exactly using software for the weighted maximum satisfiablity problem in Lemma \ref{lemma:maxsat}.
In Table \ref{tab:max sat results} we show results obtained using the solver {\tt Akmaxsat} by K\"ugel \cite{akmaxsat}.

\begin{table}[ht!]
\begin{center}
{\footnotesize
\begin{tabular}{|c||c|c|c|c|c|c|c|c|c|} \hline
\diagbox{$k$}{$n$}      & 7    & 8   & 9   & 10   & 11   & 12   & 13  &14  &15\\ \hline\hline
3                         & 2    & 5   & 9   & 20   & 34   & 51   & 83   & 121& 165${}^*$ \\ \hline
4                         & 0    & 0   & 3   & 7    & 12   & 18   & 34  &  &\\ \hline
5                         & 0    & 0   & 0   & 0    & 4    &   9   &     &  &  \\ \hline
\end{tabular}
}
\caption{Exact values of $\nu_k(K_n)$ for small values of $k$ and $n$,
  as computed using the maximum satisfiability solver {\tt
    Akmaxsat}. ${}^*$The value $\nu_3(K_{15})=165$ was not determined using {\tt
    Akmaxsat}; see Proposition~\ref{pro:315}.
\label{tab:max sat results} }
\end{center}
\end{table}
Each entry in  Table \ref{tab:max sat results} required at most 48 hours of computation on a laptop with 2.5GHz dual core processor and 4GB RAM;
 the values that are missing from the table could not be computed using  {\tt Akmaxsat} within this time.

We finally note that to our knowledge, prior to this work, the exact value of
$\nu_k(K_n)$ was not known for any $n,k$ with $2 < k
< \ceil{n/2}$ (we recall that $K_n$ can be drawn without crossings in
$\ceil{n/2}$ pages; thus $\nu_{k}(K_n) = 0$ for $k \ge \ceil{n/2}$ and
$\nu_k(K_n) > 0$ for $k < \ceil{n/2}$).

Some preliminary computation work was done using Sage \cite{SAGE}.

\subsection{Asymptotic bounds}
As an example of the numerical results presented here, let $m=69$ and
$k=10$. We computed $\mathcal{FJ}_{10}(G_{69})\approx 856,520$, and using this value
we get
$$\frac{\nu_{10}(K_{69})}{\binom{69}{4}}
                                   \ge     \frac{\binom{69}{4} - \mathcal{FJ}_{10}(G_{69})}{\frac{69}{4}} \\
                                  \approx  9.2313 \times 10^{-3}.
$$

Recall that, for all $n > m \ge 4$,
$$\frac{\nu_k(K_n)}{ \binom{n}{4}} \ge   \frac{\nu_k(K_m)}{\binom{m}{4}}.$$

Thus it follows that
$$\frac{\nu_{10}(K_n)}{ \binom{n}{4}} \ge   \frac{\nu_{10}(K_{69})}{\frac{69}{4}} \ge 
9.2313 \times 10^{-3} \quad (n > 69).$$
For $n > 69$, this is an improvement on the best previously known lower bound (from
\eqref{eq:nuk lower bound}), namely
\[
\frac{\nu_{10}(K_n)}{ \binom{n}{4}} \ge
\frac{2}{(3(10)-2)^2} \approx 2.5510\times 10^{-3}.
\]

In Table \ref{tab:JF bounds} we give a systematic
list of such improved bounds.
Computation was done on a Dell Precision T7500 workstation with 92GB of RAM memory, using the semidefinite
 programming solver SDPT3~\cite{SDPT3-ref1,SDPT3-ref2} under Matlab 7 together with the Matlab package YALMIP~\cite{YALMIP}.

%

\begin{table}[h!]
\begin{center}
{\footnotesize
\begin{tabular}{|c||c|c|c|c||c|} \hline
\diagbox{$k$}{$m$} & 39 & 49 & 59 & 69  & Lower bounds from (\ref{eq:nuk lower bound})  \\ \hline\hline
3 &  $1.4266\times 10^{-1}$ & $1.4827\times 10^{-1}$ & $1.5194\times 10^{-1}$ & $1.5452\times 10^{-1}$  & $ 2.0000\times 10^{-2}$    \\ \hline
4 &  $7.4205\times 10^{-2}$ & $7.9473\times 10^{-2}$ & $8.2837\times 10^{-2}$ & $8.5127\times 10^{-2}$  & $ 2.5210\times 10^{-2}$    \\ \hline
5 &  $4.2208\times 10^{-2}$ & $4.6916\times 10^{-2}$ & $5.0019\times 10^{-2}$ & $5.2141\times 10^{-2}$  & $ 7.8125\times 10^{-3}$    \\ \hline
6 &  $2.5728\times 10^{-2}$ & $2.9633\times 10^{-2}$ & $3.2258\times 10^{-2}$ & $3.4151\times 10^{-2}$  & $ 7.8125\times 10^{-3}$    \\ \hline
7 &  $1.6260\times 10^{-2}$ & $1.9605\times 10^{-2}$ & $2.1895\times 10^{-2}$ & $2.3524\times 10^{-2}$  & $ 4.1322\times 10^{-3}$    \\ \hline
8 &  $1.0544\times 10^{-2}$ & $1.3390\times 10^{-2}$ & $1.5356\times 10^{-2}$ & $1.6812\times 10^{-2}$  & $ 4.1322\times 10^{-3}$    \\ \hline
9 &  $6.9603\times 10^{-3}$ & $9.3377\times 10^{-3}$ & $1.1062\times 10^{-2}$ & $1.2333\times 10^{-2}$  & $ 2.5510\times 10^{-3}$    \\ \hline
10&  $4.6086\times 10^{-3}$ & $6.6189\times 10^{-3}$ & $8.1148\times 10^{-3}$ & $9.2314\times 10^{-3}$  & $ 2.5510\times 10^{-3}$    \\ \hline
11&  $3.0659\times 10^{-3}$ & $4.7436\times 10^{-3}$ & $6.0329\times 10^{-3}$ & $7.0285\times 10^{-3}$  & $ 1.7301\times 10^{-3}$    \\ \hline
12&  $2.0007\times 10^{-3}$ & $3.4078\times 10^{-3}$ & $4.5294\times 10^{-3}$ & $5.3894\times 10^{-3}$  & $ 1.7301\times 10^{-3}$    \\ \hline
13&  $1.2987\times 10^{-3}$ & $2.4613\times 10^{-3}$ & $3.4307\times 10^{-3}$ & $4.2025\times 10^{-3}$  & $ 1.2500\times 10^{-3}$    \\ \hline
14&  $8.2096\times 10^{-4}$ & $1.7736\times 10^{-3}$ & $2.6077\times 10^{-3}$ & $3.2930\times 10^{-3}$  & $ 1.2500\times 10^{-3}$    \\ \hline
15&  $4.7807\times 10^{-4}$ & $1.2613\times 10^{-3}$ & $1.9718\times 10^{-3}$ & $2.5870\times 10^{-3}$  & $ 9.4518\times 10^{-4}$    \\ \hline
16&  $2.6556\times 10^{-4}$ & $8.9554\times 10^{-4}$ & $1.5141\times 10^{-3}$ & $2.0348\times 10^{-3}$  & $ 9.4518\times 10^{-4}$    \\ \hline
17&  $1.3191\times 10^{-4}$ & $6.2938\times 10^{-4}$ & $1.1514\times 10^{-3}$ & $1.6023\times 10^{-3}$  & $ 7.3964\times 10^{-4}$    \\ \hline
18&  $5.2726\times 10^{-5}$ & $4.2802\times 10^{-4}$ & $8.5199\times 10^{-4}$ & $1.2562\times 10^{-3}$  & $ 7.3964\times 10^{-4}$    \\ \hline
19&  $8.8699\times 10^{-6}$ & $2.7320\times 10^{-4}$ & $6.3294\times 10^{-4}$ & $9.8258\times 10^{-4}$  & $ 5.9453\times 10^{-4}$    \\ \hline
20&  $0$                    & $1.7127\times 10^{-4}$ & $4.7985\times 10^{-4}$ & $7.7482\times 10^{-4}$  & $ 5.9453\times 10^{-4}$    \\ \hline
\end{tabular}
}
\caption{Lower bounds for $\frac{\nu_k(K_n)}{\binom{n}{4}} \ge \frac{\binom{m}{4} 
- \mathcal{FJ}_{k}(G_{m})}{\binom{m}{4}}$, for all $n
  > m$, $m \in\{39,49,59,69\}$ and $k = 3,4,\ldots,20$,
and comparison with the previous best lower bounds on $\frac{\nu_k(K_n)}{\binom{n}{4}}$ (from (\ref{eq:nuk lower bound})). \label{tab:JF bounds}}.
\end{center}
\end{table}

Note that the values in the column ``$m=69$'' improve on the known lower bounds  (\ref{eq:nuk lower bound})  in all cases, for $n > 69$.

\begin{table}[h!]
\begin{center}
{\footnotesize
\begin{tabular}{|c|c|c|c|c|}\hline
& & & & \\[3pt]
 & Previous lower bound  & Improved lower bound   &
Best upper bound & Quotient between lower  \\
$k$ & on $\lim_{n\to\infty}
\frac{\nu_k(K_n)}{{\binom{n}{4}}}$ &
on $\lim_{n\to\infty}
 \frac{\nu_k(K_n)}{{\binom{n}{4}}}$ &
on $\lim_{n\to\infty}
  \frac{\nu_k(K_n)}{{\binom{n}{4}}}$ & and upper bound \\[0.4cm]
\hline
3 & $ 2.0000\times 10^{-2}$ &  $1.5452 \times 10^{-1}$ &
$1.8518\times 10^{-1}$ & $0.8344$      \\ \hline
4 & $ 2.5210\times 10^{-2}$ & $8.5127\times 10^{-2}$  & $1.0937\times 10^{-1}$  &  $0.7783$   \\ \hline
5 & $ 7.8125\times 10^{-2}$ & $5.2141\times 10^{-2}$  & $7.2000\times 10^{-2}$  &  $0.7241$   \\ \hline
6 & $7.8125\times 10^{-3}$ & $3.4151\times 10^{-2}$ & $5.0925\times 10^{-2} $  &  $0.6706$   \\ \hline
7 & $ 4.1322\times 10^{-3}$ & $2.3524\times 10^{-2}$  &  $3.7900\times
10^{-2}$& $0.6706$    \\ \hline
8 & $ 4.1322\times 10^{-3}$ & $1.6812\times 10^{-2}$  &  $2.9296\times
10^{-2}$&  $0.5738$   \\ \hline
9 & $ 2.5510\times 10^{-3}$ & $1.2333\times 10^{-2}$  & $2.3319\times 10^{-2}$  & $0.5287$    \\ \hline
10& $ 2.5510\times 10^{-3}$ & $9.2314\times 10^{-3}$  & $1.9000\times 10^{-2}$  & $0.4858$    \\ \hline
11& $ 1.7301\times 10^{-3}$ & $7.0285\times 10^{-3}$  & $1.5777\times 10^{-2}$  & $0.4454$    \\ \hline
12& $ 1.7301\times 10^{-3}$ & $5.3894\times 10^{-3}$  & $1.3310\times 10^{-2}$  & $0.4049$    \\ \hline
13& $ 1.2500\times 10^{-3}$ & $4.2025\times 10^{-3}$  & $1.1379\times 10^{-2}$  & $0.3693$    \\ \hline
14& $ 1.2500\times 10^{-3}$ & $3.2930\times 10^{-3}$  & $9.8396\times 10^{-3}$  & $0.3346$    \\ \hline
15& $ 9.4518\times 10^{-3}$ & $2.5870\times 10^{-3}$  & $8.5925\times 10^{-3}$  & $0.3010$    \\ \hline
16& $ 9.4518\times 10^{-4}$ & $2.0348\times 10^{-3}$  & $7.5683\times 10^{-3}$  & $0.2688$    \\ \hline
17& $ 7.3964\times 10^{-4}$ & $1.6023\times 10^{-3}$  & $6.7168\times 10^{-3}$  & $0.2385$    \\ \hline
18& $ 7.3964\times 10^{-4}$ & $1.2562\times 10^{-3}$  & $6.0013\times 10^{-3}$  & $0.2093$    \\ \hline
19& $ 5.9453\times 10^{-4}$ & $9.8258\times 10^{-4}$  & $5.3943\times 10^{-3}$  & $0.1821$    \\ \hline
20& $ 5.9453\times 10^{-4}$ & $7.7482\times 10^{-4}$  & $4.8750\times
10^{-3}$  & $0.1589$
\\ \hline
\end{tabular}
}
\caption{\small Summary of lower and upper bounds for
  $\lim_{n\to\infty} \nu_k(K_n)/\binom{n}{4}$. The second column gives
  the previously best lower bounds,  as given in \eqref{eq:forfour}, \eqref{eq:foreven},
  and \eqref{eq:forodd}.  The third column presents the lower bounds we obtained
  by computing $\mathcal{FJ}_k(G_{69})$ for $k=3,4,\ldots,20$ (this is
  the fifth column of Table~\ref{tab:JF bounds}). In the fourth column
  we show the best upper bounds known, given by \eqref{eq:nuk upper bound}
 (alternatively, using Observation~\ref{obs:kdin2} and that
  $\nu_k(K_n) \le Z_k(n)$). Finally, in the fifth column we show the ratio
  between the values given in the third and fourth columns.
}
\label{tab:newtab}
\end{center}
\end{table}

In Table~\ref{tab:newtab} we summarize the best 
lower and upper bounds known for $\lim_{n\to\infty}
\nu_k(K_n)/\binom{n}{4}$.

\section{Drawing $K_n$ in $k$ pages: conjectures and results}\label{sec:upperbounds}

In this section we calculate the number $Z_k(n)$ of crossings that result by
drawing $K_n$ on $k$ pages using the construction by Damiani et
al.~\cite{DaDASa} (we recall that we call this the {\em DDS construction}), and
a generating function $G_k(z):=\sum_{n\geq 0} Z_k(n) z^n$ for it.
This construction is a natural generalization of the construction by
Bla\v{z}ek and Koman~\cite{bk} (who fully described it for $2$ pages,
and briefly mentioned that it could be generalized to $k>2$ pages), and a slight refinement of the
construction by Shahrokhi et al.~\cite{sssv96}. 
The description of the construction and the calculation of $Z_k(n)$ and $G_k(z)$ are
in Section~\ref{sub:cons}.

We calculate the exact value of
$Z_k(n)$ for two reasons. First, the value $Z_k(n)$ was
determined in neither~\cite{bk} nor~\cite{DaDASa}; since no
better (crossing-wise) construction to draw $K_n$ in $k$ pages is
known, this is a calculation worth doing.  Second, for all values of
$k$ and $n$ for which we now (that is, with the results reported in
this paper)  know the exact value of $\nu_k(K_n)$, we have $\nu_k(K_n)=Z_k(n)$.
These confirmations, as well as an additional feature that we shall
explain below (namely Proposition~\ref{pro:oddeven}, from which we
will compute $\nu_3(K_{15})$), 
lend credibility to the conjecture
$\nu_k(K_n)=Z_k(n)$, which we formally put forward in 
Section~\ref{sub:gencon}. As we shall see, the value of $Z_k(n)$
depends on $n$\,mod\,$k$, and for each fixed $k$ and each
fixed $q\in\{0,1,\ldots,k-1\}$, there is a degree $4$ polynomial
$G_{q,k}(n)$ such that $Z_k(n)=G_{q,k}(n)$ for all $n$ such that 
$n$\,mod\,$k =q$.
In Section~\ref{sub:kdividesn} we explicitly give this polynomial for the case
$n$\,mod\,$k=0$.
We finally present, in Section~\ref{sub:keq3},  a slightly more
detailed discussion and further results for the case $k=3$.


\subsection{The DDS construction: an upper bound $Z_k(n)$ for
  the $k$-page \\ crossing number
  of $K_n$}\label{sub:cons}

The DDS construction was described in~\cite{DaDASa} in terms of the adjacency
matrix.  We have found it both more lively and  more convenient
(for our calculations) to follow the more geometrical viewpoint of
Shahrokhi et al.~to describe this construction, and this is the
approach we follow below.

We draw $K_n$ in $k$ pages using the circular model. Label the
vertices $0,1,2,\ldots,n-1$ in the clockwise order in which they occur
in the boundary of the circle. 
For $i=0,1,\ldots,n-1$, let $M_i$ be the set of edges whose endpoints
have sum $i$ (modulo $n$). Thus, $M_i$ is a matching for each
$i\in\{0,1,\ldots,n-1\}$. We note that each edge belongs to exactly
one matching $M_i$. 
For $s,t\in\{0,1,\ldots,n-1\}$, $s<t$, let
$\M_{s,t}:=M_s\cup M_{s+1}\cup \cdots M_{t}$.  Loosely speaking,
$\M_{s,t}$ consists of the edges of $t-s+1$ ``consecutive''
matchings. In Figure~\ref{fig:k103pages} we illustrate the sets
$\M_{0,3}$ (left), $\M_{4,6}$ (center), and $\M_{7,9}$ (right), for
the case $n=10$.

\begin{figure}[h!]
\begin{center}
\scalebox{0.28}{\input{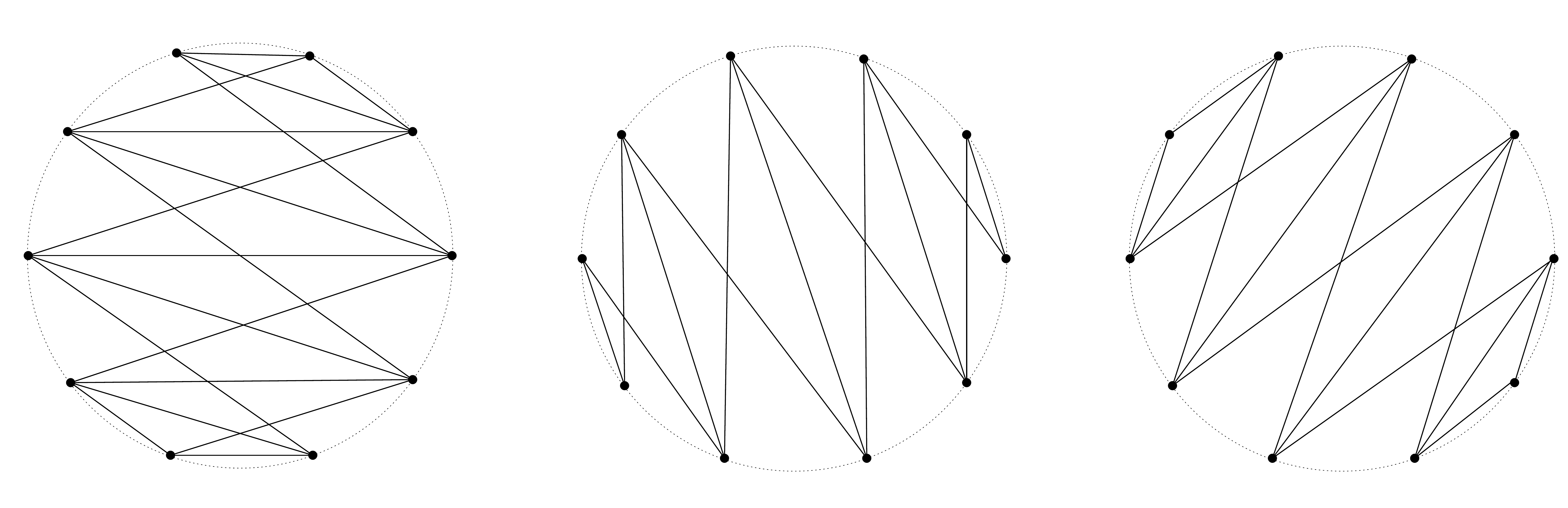_t}}
\caption{\label{fig:k103pages} To draw $K_{10}$ in $3$ pages, we place
  the edges in $\M_{0,3}=M_0\cup M_1\cup M_2\cup M_3$ in page $0$
  (left), the
  edges in $\M_{4,6}=M_4\cup M_5\cup M_6$ in page $1$ (center), and the edges
  in $M_{7,9}=M_7\cup M_8\cup M_9$ in page $2$ (right).
}
\end{center}
\end{figure}

Let $p:=\floor{n/k}$ and $q:=n$ mod $k$ (thus
$n=pk+q$). The DDS construction consists simply on
distributing the edges of $K_n$ into $k$ pages $0,1,\ldots,k-1$ 
as follows:

\begin{enumerate}
\item\label{it:twop}
for
$0 \le \ell < q$, place in page $\ell$  the edges in 
$\M_{\ell(p+1), \ell (p+1) +p }$; and
\item\label{it:onep} for $q \le \ell < k$, place in page $\ell$ the edges in 
$\M_{\ell p + q, \ell p + q + (p-1)}$.
\end{enumerate}

Thus, if $0\le \ell < q$, then page $\ell$ contains the edges of $p+1$
matchings, and if $q \le \ell < k$, then page $\ell$ contains the
edges of $p$ matchings. 
Note that if $k$ divides $n$ (that is, $q=0$), then there is no $\ell$
such that $ 0 \le \ell < q$, and so each page contains the edges
of $p$ matchings.  In Figure~\ref{fig:k103pages} we illustrate the
DDS construction for the case $k=3, n=10$.

We shall give $Z_k(n)$ in terms of a function $F$ that we now
define. First, let
\[
f(r) := 
\frac{rn}{2}  - \frac{r^2}{2} -\frac{n}{2}+\frac{1}{2},
\]
and then let 
\begin{equation}\label{eq:def1}
F(r,n):=
\sum_{\ell=0}^{r-1} (r-\ell)f(\ell) = 
-\frac{r^4}{24}+{\frac{n r^3}{12}}-{\frac{n r^2}{4}}+{\frac{7 r^2 }{24}}+{\frac{n r}{6}}-{\frac{r}{4}}.
\end{equation}

\begin{proposition}\label{pro:calcu}
The number of crossings that result from drawing $K_n$ in $k\ge 1$
pages using the DDS construction is 
\[
Z_k(n):=\nmodk{n}{k}\cdot
F\biggl(\bigfloor{\frac{n}{k}}+1,n\biggl) +
\bigl(k - \nmodk{n}{k}\bigr) \cdot F\biggl(\bigfloor{\frac{n}{k}},n\biggr).
\]
Thus $K_n$ can be drawn in $k$ pages with $Z_k(n)$ crossings, and so
\[
\nu_k(K_n) \le Z_k(n).
\]
\end{proposition}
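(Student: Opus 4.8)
The plan is to count directly the crossings created by the DDS drawing and to show the total equals $Z_k(n)$; since the DDS drawing is a genuine $k$-page drawing of $K_n$, the inequality $\nu_k(K_n)\le Z_k(n)$ is then immediate. Throughout I use the standard fact that in a single circular (one-page) drawing two edges cross if and only if their endpoints interleave on the circle. The first reduction is the observation that two edges of the same matching $M_s$ never cross: their chords are mutually parallel (all perpendicular to the diameter in the direction $\pi s/n$), so they are nested or disjoint but never interleaved. Hence every crossing occurs between edges of two \emph{distinct} matchings that have been placed on a common page.

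The core of the argument is to count $c(s,t)$, the number of crossings between $M_s$ and $M_t$ when both lie on one page. Such a crossing is exactly an interleaving $4$-tuple $w<x<y<z$ in the cyclic order on $\{0,\dots,n-1\}$ whose crossing diagonals $\{w,y\}$ and $\{x,z\}$ satisfy $\{w+y,\,x+z\}=\{s,t\}\pmod n$. I would carry out this lattice-point count and show it depends only on the cyclic distance $\ell=|s-t|$, giving $c(s,t)=f(\ell)$. The clean reason for the dependence on $\ell$ alone is that the rotation $v\mapsto v+1$ is a symmetry of the circular drawing sending $M_i\mapsto M_{i+2}$, which for odd $n$ acts transitively on the indices; a short evaluation then identifies the value with $f(\ell)=\tfrac12(\ell-1)(n-1-\ell)$ of \eqref{eq:def1}.

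Granting this, a page carrying $r$ consecutive matchings contains, for each $\ell\in\{1,\dots,r-1\}$, exactly $r-\ell$ pairs of matchings at distance $\ell$, so the number of crossings on that page is $\sum_{\ell=1}^{r-1}(r-\ell)f(\ell)$, which is precisely the polynomial $F(r,n)$ of \eqref{eq:def1}. By construction the $k$ pages carry, respectively, $q=n\bmod k$ blocks of $p+1$ consecutive matchings and $k-q$ blocks of $p=\lfloor n/k\rfloor$ consecutive matchings, and these blocks tile $M_0,\dots,M_{n-1}$. Summing $F$ over the pages yields exactly $q\,F(p+1,n)+(k-q)\,F(p,n)=Z_k(n)$, as required.

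The main obstacle is the case of even $n$. Then the matchings have two different sizes, the rotation only preserves the parity of the index, and $c(s,t)$ is no longer determined by $\ell$ alone: for even $\ell$ one finds $c(s,t)=f(\ell)\pm\tfrac12$ according as $s$ (equivalently $t$) is even or odd, so that $f(\ell)$ is the average of the two values and $F(r,n)$ is in general a half-integer. Individual page counts then genuinely deviate from $F(r,n)$, and the crux is to show that the $\pm\tfrac12$ corrections cancel over the whole drawing. I would establish this by a parity count over the explicit blocks: for a fixed even $\ell$, a block of $r$ consecutive matchings contributes a net correction only when $r-\ell$ is odd, in which case the sign is $(-1)$ to the parity of the block's first index. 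The blocks that can contribute are the $q$ blocks of size $p+1$ when $p$ is even, or the $k-q$ blocks of size $p$ when $p$ is odd, and in either case their first indices alternate in parity as one moves along the cycle. The decisive point is that $n$ being even forces the number of contributing blocks ($q$, respectively $k-q$) to be even, so the alternating signs cancel in pairs and the aggregate correction vanishes. Hence the total is exactly $Z_k(n)$ for every parity of $n$, and $\nu_k(K_n)\le Z_k(n)$ follows at once.
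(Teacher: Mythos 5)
Your proposal is correct and follows essentially the same route as the paper's proof: count the crossings between pairs of matchings via $f$ (with $\pm\tfrac12$ parity corrections when $n$ is even), sum within each page to obtain $F(r,n)$, and show that for even $n$ the half-integer corrections cancel because the contributing blocks are even in number and their starting indices alternate in parity --- which is exactly the paper's case analysis on the parity of $p$, where the relevant collections split evenly into even-even and odd-odd types. The only point you leave untreated is $k=1$ (the proposition allows it), where index differences exceed $n/2$; the paper handles this case separately by checking $Z_1(n)=F(n,n)=\binom{n}{4}$ directly, while your argument covers it once one notes the symmetry $f(\ell)=f(n-\ell)$.
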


Note that $Z_k(n)$ is a quasi-polynomial of period $k$ in $n$ 
(cf. e.g. R.~Stanley~\cite[Sect.~4.4]{rstI}). This implies that its generating function $G_k(z)$
is rational, i.e. the ratio of two polynomials in $z$, with denominator having only $k$-th roots of unity
as roots.
We will calculate the generating function $G_k(z)$ for $Z_k(n)$ below
(cf.~Proposition~\ref{pro:gfZ_k}).

\begin{proof}[Proof of Proposition~\ref{pro:calcu}]
Suppose first that $k=1$. All the edges are then drawn in the same
page. Thus every four points define a crossing, and so we have
$\binom{n}{4}$ crossings in total. Since
\[Z_1(n) = F(n,n)= 
-{\frac{n^4}{24}}+{\frac{n^4}{12}}-{\frac{n^3}{4}}+
{\frac{7 n^2 }{24}}+{\frac{n^2}{6}}-{\frac{n}{4}}= \binom{n}{4},
\]
it follows that the statement is true for $k=1$.

Thus we suppose for the rest of the proof that $k\ge 2$.

To calculate the number of crossings in each page, we first need to
calculate the crossings between edges in distinct matchings, when
these matchings are placed in the same page.
If $M_i$ and $M_j$ are in
the same page, then the number 
$\CM{M_i}{M_j}$ of crossings
involving an edge in $M_i$ and an edge in $M_j$ depends 
on the parity
of $i,j$, and $n$, as well as on $j-i$.
It is an easy exercise to show that for all $i,j$ such that
$0 \le i < j \le n-1$ and $j-i \le n/2$,
\begin{equation}
\CM{M_i}{M_j}=
\begin{cases}
f(j-i), &\text{ if $n$ is odd};\\
f(j-i) -\frac{1}{2}, &\text{ if $i$ and $
  j$
  are odd and $n$ is even}; \\
f(j-i),
&\text{ if $i$ and $j$ have distinct parity and $n$ is even;} \\
f(j-i) + \frac{1}{2}, 
&\text{ if
  $i, j$, and $n$ are even.} \\
\end{cases}
\end{equation}


With this information at hand, we may proceed to calculate the number
$\ucr(\M_{s,t})$ of crossings with both edges in $\M_{s,t}$, when all
the edges in $\M_{s,t}$ are in the same page. Formally, for $s,t$ such
that $0 \le s < t \le n-1$, let
$\ucr(\M_{s,t}) := \sum_{s \le i < j \le t}
\CM{M_i}{M_j}$. Our aim (as
this is all we shall need) is
to calculate $\ucr(\M_{s,t})$ for values of $s$ and $t$ such that $0
\le s < t \le n-1$ and $t-s \le n/2$.

Note that there are two types of collections
$\M_{s,t}$ that appear in the construction: those of the form 
$\M_{\ell(p+1), \ell (p+1) +p }$ for $\ell\in\{0,1,\ldots,q-1\}$
(these contain the edges in $p$ matchings, and we call them {\em large}
collections), and those of the form 
$\M_{\ell p + q, \ell p + q + (p-1)}$ for
$\ell\in\{q,q+1,\ldots,k-1\}$ (these contain the edges in $p-1$ matchings, and we
call them {\em small} collections). Thus there are $q$ large
collections and $k-q$ small collections.

We observe that it follows immediately from the construction that
\begin{equation}\label{eq:thesu}
Z_k(n)= 
\sum_{\M_{s,t} \text{ large }} \ucr(\M_{s,t}) +
\sum_{\M_{s,t} \text{ small }} \ucr(\M_{s,t}),
\end{equation}
where the first summation is over all collections  $\M_{s,t}$ in the 
construction that are
large, and the second summation is over all collections $\M_{s,t}$ in
the construction
that are small.

We will analyze separately the two possibilities for the parity of
$n$.

\vglue 0.3 cm
\noindent{\sc Case 1.} {\em $n$ is odd}
\vglue 0.3 cm

Let $n$ be odd, and let 
$s,t$ satisfy $0 \le s < t \le n-1$ and $t-s  \le n/2$. 

Then
\begin{align*}
\ucr(\M_{s,t}) &= 
\sum_{s \le i < j \le t} \CM{M_i}{M_j} 
= 
\sum_{{s \le i < j \le t} } f(j-i) 
= \sum_{1\le\ell\le t-s} ((t-s+1)-\ell)f(\ell)\\
&=
\sum_{1\le\ell\le t-s} ((t-s+1)-\ell)
\biggl(\frac{\ell n}{2}  - \frac{\ell^2}{2} -\frac{n}{2}+\frac{1}{2}\biggr) 
=
F(t-s+1,n).
\end{align*}


Thus it follows that if $\M_{s,t}$ is a large collection, then
$\ucr(\M_{s,t}) = F(t-s+1,n)=F(p+1,n)$, and if it is small, then
$\ucr(\M_{s,t}) = F(t-s+1,n)=F(p,n)$. Using this 
and \eqref{eq:thesu}, and recalling that there are $q=n\,\text{mod }k$
large collections and $k-q$ small collections, and 
that $p=\floor{n/k}$, 
we obtain
\begin{align*}
Z_k(n)&= q\cdot
F(p+1,n) + (k-q)\cdot 
F(p,n)\\
&=
\nmodk{n}{k}\cdot F\biggl(\bigfloor{\frac{n}{k}}+1,n\biggr) + (k-\nmodk{n}{k}) 
\cdot F\biggl(\bigfloor{\frac{n}{k}},n\biggr).
\end{align*}

\vglue 0.3 cm
\noindent{\sc Case 2.} {\em $n$ is even}
\vglue 0.3 cm

Let $n$ be even, and let 
$s,t$ satisfy $0 \le s < t \le n-1$ and $t-s  \le n/2$.
For this case ($n$ even), the determination of 
$\ucr(\M_{s,t})$ is more involved, since it
depends both on the parity of $t-s$ and on the
parity of $s$. 
for $i <r$; and
$\rr_i= \{ R_{im+r},R_{im+r+1},\ldots,R_{(i+1)m+(r-1)} \}$ for $r \le i < k$.

To simplify the expressions it is convenient to define 
\begin{align*}
\oddodd{s}{t} &:= \bigl| \{ (i,j) \ | \ s \le i < j \le t, \hbox{$i$ odd,
  $j$ odd}\} \bigr|, \\
\oddeven{s}{t} &:= \bigl| \{ (i,j) \ | \ s \le i < j \le t, \hbox{$i$ odd,
  $j$ even}\} \bigr|, \\
\evenodd{s}{t} &:= \bigl| \{ (i,j) \ | \ s \le i < j \le t, \hbox{$i$ even,
  $j$ odd}\}\bigr|, \\
\eveneven{s}{t} &:= \bigl| \{ (i,j) \ | \ s \le i < j \le t, \hbox{$i$ even,
  $j$ even}\}\bigr|.
\end{align*}

An elementary argument shows that
\begin{equation}\label{eq:theeo}
\eveneven{s}{t}\, - \oddodd{s}{t} = 
\begin{cases}
\frac{t-s}{4}, &\text{if both $s$ and $t$ are even;}\\
0, &\text{if $s$ and $t$ have distinct parity; and}\\
-\frac{t-s}{4}, &\text{if both $s$ and $t$ are odd}.
\end{cases}
\end{equation}

We have
\begin{align*}
\ucr(\M_{s,t}) &= 
\sum_{s \le i < j \le t} \CM{M_i}{M_j} 
 = 
\sum_{\stackrel{s \le i < j \le t}{i \eve} } \CM{M_i}{M_j}
+
\sum_{\stackrel{s \le i < j \le t}{i \odd} } \CM{M_i}{M_j}\\
&=
\sum_{\stackrel{s \le i < j \le t}{i \eve,j \eve} } \CM{M_i}{M_j}
+
\sum_{\stackrel{s \le i < j \le t}{i \eve,j \odd} } \CM{M_i}{M_j}\\
&+
\sum_{\stackrel{s \le i < j \le t}{i \odd, j \eve} } \CM{M_i}{M_j}
+
\sum_{\stackrel{s \le i < j \le t}{i \odd, j \odd} } \CM{M_i}{M_j}\\
&= 
\sum_{\stackrel{s \le i < j \le t}{i \eve,j \eve} } \bigl(f(j-i)+\frac{1}{2}\bigr)
+
\sum_{\stackrel{s \le i < j \le t}{i \eve,j \odd} } f(j-i)\\
&+
\sum_{\stackrel{s \le i < j \le t}{i \odd, j \eve} } f(j-i)
+
\sum_{\stackrel{s \le i < j \le t}{i \odd, j \odd} } \bigl(f(j-i)-\frac{1}{2}\bigr) \\
&=
\sum_{s \le i < j \le t} f(j-i)
+\sum_{\stackrel{s \le i < j \le t}{i \eve,j \eve} } \frac{1}{2}
\,\,\,\,-\sum_{\stackrel{s \le i < j \le t}{i \odd,j \odd} } \frac{1}{2}\\
&=
\sum_{s \le i < j \le t} f(j-i)
+\frac{1}{2} \eveneven{s}{t} - \frac{1}{2} \oddodd{s}{t} \\
&=
\sum_{1\le\ell\le t-s} ((t-s+1)-\ell)f(\ell) + 
\frac{1}{2} \eveneven{s}{t} - \frac{1}{2} \oddodd{s}{t} \\
&=
 F(t-s+1,n) + \frac{1}{2} \eveneven{s}{t} - \frac{1}{2} \oddodd{s}{t}.
\end{align*}


Using this last expression and \eqref{eq:theeo}, it follows that
\begin{equation}\label{eq:thero}
\ucr(\M_{s,t}) = 
\begin{cases}
F(t-s+1,n) + \frac{t-s}{4}, &\text{if $s$ and $t$ are even};\\
F(t-s+1,n), &\text{if $s$ and $t$ have distinct parity};\\
F(t-s+1,n) - \frac{t-s}{4}, &\text{if $s$ and $t$ are odd}.
\end{cases}
\end{equation}

Let us say that a collection $\M_{s,t}$ is {\em even-odd} if $s$ is even and $t$
is odd; it is {\em even-even} if $s$ and $t$ are even; it is {\em
  odd-even} if $s$ is odd and $t$ is even; and it is {\em odd-odd} if
$s$ and $t$ are odd.

We now proceed to compute $Z_k(n)$, analyzing separately the
cases when $p$ is odd and when $p$ is even.

Suppose first that $p$ is odd. It is readily verified that in this
case (i) each of the $q$ large collections is either even-odd or
odd-even; and (ii) out of the $k-q$ small collections,
$(k-q)/2$ are even-even and $(k-q)/2$ are odd-odd. 
Recalling that if a collection $\M_{s,t}$ is large then $t-s=p$ and
that if it is small then $t-s=p-1$, and using
\eqref{eq:thesu} and \eqref{eq:thero}, it follows that
\begin{align*}
Z_k(n)&=q\cdot F(p+1,n) + 
\frac{(k-q)}{2}\cdot \biggl(F(p,n) + \frac{(p-1)}{4}\biggr)+
\frac{(k-q)}{2}\cdot \biggl(F(p,n) - \frac{(p-1)}{4}\biggr)\\
&=
q\cdot F(p+1,n) + (k-q)\cdot F(p,n)\\
&= 
\nmodk{n}{k}\cdot F\biggl(\bigfloor{\frac{n}{k}}+1,n\biggr) + (k-\nmodk{n}{k}) 
\cdot F\biggl(\bigfloor{\frac{n}{k}},n\biggr).
\end{align*}

Suppose finally that $p$ is even.
It is easily checked that in this case 
(i) out of the $q$ large collections, $q/2$ are even-even and $q/2$
are odd-odd; and (ii)
each of the $k-q$ small collections is either even-odd or odd-even.
Recalling again that if a collection $\M_{s,t}$ is large then $t-s=p$ and
that if it is small then $t-s=p-1$, and using
\eqref{eq:thesu} and \eqref{eq:thero}, it follows that
\begin{align*}
Z_k(n)&=\frac{q}{2}\cdot \biggl(F(p+1,n) + \frac{p}{4}\biggr) +
\frac{q}{2}\cdot \biggl(F(p+1,n) - \frac{p}{4}\biggr)+
(k-q)\cdot F(p,n)\\
&=
q\cdot F(p+1,n) + (k-q)\cdot F(p,n)\\
&= 
\nmodk{n}{k}\cdot F\biggl(\bigfloor{\frac{n}{k}}+1,n\biggr) + (k-\nmodk{n}{k}) 
\cdot F\biggl(\bigfloor{\frac{n}{k}},n\biggr). \qedhere
\end{align*}
\end{proof}


\begin{proposition}\label{pro:gfZ_k}
For a fixed $k$, the generating function $G_k(z)$ for $Z_k(n)$ is 
\[
G_k(z):=\sum_{n\geq 0} Z_k(n) z^n=z^{2k+1}\frac{(k-2)(1-z)+1-z^{k+1}}{(1-z)^3(1-z^k)^3}.
\]
\end{proposition}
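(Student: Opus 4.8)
The plan is to sum the series $\sum_{n\ge 0}Z_k(n)z^n$ directly from the closed form for $Z_k(n)$ furnished by Proposition~\ref{pro:calcu}, organizing the sum by residue classes modulo $k$. Writing $n=pk+q$ with $p=\floor{n/k}\ge 0$ and $q=\nmodk{n}{k}\in\{0,1,\dots,k-1\}$, Proposition~\ref{pro:calcu} reads
\[
Z_k(pk+q)=q\,F(p+1,\,pk+q)+(k-q)\,F(p,\,pk+q),
\]
and since every $n\ge 0$ is uniquely of this form, the generating function splits as
\[
G_k(z)=\sum_{q=0}^{k-1}z^q\,\Phi_q(z^k),\qquad \Phi_q(w):=\sum_{p\ge 0}Z_k(pk+q)\,w^p .
\]
By \eqref{eq:def1}, $F(r,n)$ is a polynomial of degree $4$ in $r$ and degree $1$ in $n$, so for each fixed $q$ the quantity $Z_k(pk+q)$ is a polynomial in $p$ of degree $4$ whose coefficients are polynomials in $q$ and $k$. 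The only two tools I would need are standard: the identities $\sum_{p\ge 0}\binom{p}{j}w^p=w^j/(1-w)^{j+1}$ for $0\le j\le 4$, and the finite power sums $\sum_{q=0}^{k-1}q^{\,i}z^q$, each of which is rational in $z$ with denominator $(1-z)^{i+1}$ and numerator supported on $\{1,z,\dots,z^{k+i}\}$.

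First I would expand $Z_k(pk+q)$ in the basis $\sum_{j=0}^{4}d_j(q)\binom{p}{j}$, where each $d_j(q)$ is a polynomial in $q$ (of degree at most $4-j$) with coefficients depending on $k$; applying the first identity termwise gives $\Phi_q(w)=E_q(w)/(1-w)^5$ with $E_q(w)=\sum_{j=0}^{4}d_j(q)\,w^j(1-w)^{4-j}$. Substituting $w=z^k$ and then carrying out the outer sum over $q$ reduces $\sum_{q=0}^{k-1}z^qE_q(z^k)$ to a $z^k$-weighted combination of the finite power sums $\sum_{q=0}^{k-1}q^{\,i}z^q$. It is precisely these finite sums that inject the extra factors of $(1-z)$ into the denominator and the $z^{k+1}$-type monomials into the numerator. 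Collecting everything over a common denominator of the form $(1-z)^{a}(1-z^k)^{b}$ then yields $G_k(z)$ as a single rational function.

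The crux of the argument, and the step I expect to be the main obstacle, is the simplification of this rational function: one must show that the bulky numerator obtained above is divisible by enough powers of $(1-z)$ and of $1+z+\dots+z^{k-1}=(1-z^k)/(1-z)$ to reduce the denominator to $(1-z)^3(1-z^k)^3$ and to collapse the numerator to the compact form $z^{2k+1}\bigl((k-2)(1-z)+1-z^{k+1}\bigr)$. Because $k$ is a free parameter, this is a polynomial identity in $z$ and $k$ (equivalently, in $z$ and $z^k$ once the finite $q$-sums are expanded), and I would verify it with a computer algebra system, tracking the cancellations rather than guessing them. Several independent checks guard against error: the claimed numerator $(k-2)(1-z)+1-z^{k+1}$ vanishes to first order at $z=1$ (its derivative there equals $1-2k\neq 0$), so the reduced pole at $z=1$ has order $5$, matching the fact that $Z_k(n)$ is a degree-$4$ quasi-polynomial; the lowest-order term of $G_k(z)$ is $(k-1)z^{2k+1}$, matching $Z_k(2k+1)=k-1$ read off directly from Proposition~\ref{pro:calcu}; and specializing to $k=1$ returns $z^4/(1-z)^5=\sum_{n\ge 0}\binom{n}{4}z^n$, consistent with $Z_1(n)=\binom{n}{4}$.
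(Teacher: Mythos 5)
Your proposal is correct and takes essentially the same route as the paper's proof: both decompose the sum by residue classes modulo $k$ (writing $n=pk+q$), exploit that $Z_k(pk+q)$ is a polynomial in the quotient and the residue, evaluate the resulting geometric-type series via standard closed forms (introducing the $(1-z)$ and $(1-z^k)$ denominators), and relegate the final collapse to the compact rational expression to a direct, computer-assisted computation. The only difference is the order of summation --- the paper performs the finite sum over residues first and then the infinite sum over the quotient, while you sum over the quotient first and then over the residues --- which is an inessential swap.
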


As a first application of this formula, one  sees at once that $Z_k(n)=0$ 
for $n\leq 2k$, as the first nonzero coefficient in the
expansion of $G_k$ into powers of $z$ comes up for the $2k+1$-th power.

\begin{proof}[A sketch of proof of Proposition~\ref{pro:gfZ_k}]
Note that 
\begin{equation}\label{eq:G_k}
G_k(z)=\sum_{s\geq 0}z^{sk}\sum_{\rho=0}^{k-1}Z_k(sk+\rho)z^\rho,
\end{equation} 
and
in this form one does not have to worry about $n \mod k$ and $\lfloor\frac{n}{k}\rfloor$,
as $Z_k(sk+\rho)$ is a polynomial in $s$ and $\rho$.
One computes the inner sum in \eqref{eq:G_k}
to see that it is equal to an explicit degree $4$ polynomial in $s$ divided by $(z-1)^3$,
namely, 
\begin{multline*}
\frac{24(z-1)^3}
{ s(s - 1) } \sum_{\rho=0}^{k-1}Z_k(sk+\rho)z^\rho=
(2 k^2 s^2 + 2 k^2 - 13 k s - 12 k + 4 s + 16 - k s^2 + 4 k^2 s) z^{2+k}\\ +
( - 4 k^2 s^2 - 8 k^2 s + 2 k s^2 - 4 k^2 + 18 k s - 4 s - 4 + 16 k)z^{1+k} \\
+ (2 k s^2 + 4 k s -  s^2 + 2 k - 5  s - 4 )k z^k 
+ (4 k^2 s - 2 k^2 s^2 + 9 k s + k s^2 - 10 k - 4 s - 16) z^2\\ 
+ (4 k^2 s^2 - 8 k^2 s - 2 k s^2 - 10 k s + 16 k + 4 s + 4) z
- 2 k^2 s^2 + 4 k^2 s + k s^2 + k s - 6 k.
\end{multline*}
It remains to observe that the outer sum in  \eqref{eq:G_k}
becomes a finite sum of terms of the 
form $C\sum_{s\geq 0}s^\ell z^s$, with $C$ independent of $s$. A direct computation then
gives the claimed formula.
\end{proof}

\subsection{A conjecture for the $k$-page crossing number of $K_n$}\label{sub:gencon}

The DDS construction described in Section~\ref{sub:cons} draws $K_n$ in $k$ pages
with $Z_k(n)$ crossings. We conjecture the optimality of this
construction:

\begin{conjecture}\label{con:kn}
For all positive integers $k$ and $n$, 
\[
 \nu_k(K_n) = Z_k(n).
\]
\end{conjecture}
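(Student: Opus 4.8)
The statement is a conjecture, so rather than a complete proof I will outline the natural lines of attack and indicate where each one breaks down. The upper-bound half, $\nu_k(K_n) \le Z_k(n)$, is already supplied by the DDS construction in Proposition~\ref{pro:calcu}, so the entire content of Conjecture~\ref{con:kn} is the matching lower bound $\nu_k(K_n) \ge Z_k(n)$. By Lemma~\ref{lem:reform} this is equivalent to the assertion that every $k$-coloring of the vertices of $G_n$ cuts at most $|E_n| - Z_k(n)$ edges, i.e.\ to the exact evaluation $\text{max-}k\text{-cut}(G_n) = |E_n| - Z_k(n)$. Thus the plan is to certify this optimal cut value from above.

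The first route I would pursue is the semidefinite relaxation. The Frieze-Jerrum bound $\mathcal{FJ}_k(G_n)$, reduced to the small symmetric program of Lemma~\ref{lemma:reformulation SDP}, furnishes a dual certificate for an upper bound on $\text{max-}k\text{-cut}(G_n)$, hence a lower bound on $\nu_k(K_n)$. The plan would be to guess an explicit optimal (or asymptotically optimal) dual solution $(y, X^{(m)})$ to \eqref{eq:lhs final lmi}, parametrized by $k$ and by the residue $n \bmod k$, and then verify the linear matrix inequalities symbolically. The difficulty, already visible in Tables~\ref{tab:JF bounds} and~\ref{tab:newtab}, is that $\mathcal{FJ}_k$ does \emph{not} close the gap for $k \ge 3$: the computed ratios $(\binom{m}{4} - \mathcal{FJ}_k(G_m))/\binom{m}{4}$ sit strictly below the conjectured limits. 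So the Frieze-Jerrum relaxation alone cannot prove Conjecture~\ref{con:kn}; one would have to pass to a tighter relaxation --- a higher level of the Lasserre/sum-of-squares hierarchy for max-$k$-cut, or a copositive reformulation --- and it is far from clear that dihedral symmetry reduction keeps such a program tractable enough to analyze in closed form.

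The second route is combinatorial and mirrors the (hard-won) proof of the $k=2$ case by \'Abrego et al., which settled $\nu_2(K_n) = Z_2(n)$. There the key is to reduce the global count to local contributions of small vertex sets and to exploit parity invariants; the very bookkeeping that appears in the proof of Proposition~\ref{pro:calcu} (the quantities $\eveneven{s}{t}$, $\oddodd{s}{t}$, and the identity \eqref{eq:theeo}) isolates the parity statistics one would need to control page by page. I would try to prove a per-page lower bound on crossings as a function of how many matchings $M_i$ that page carries, minimized exactly by the ``consecutive block'' assignment underlying the DDS construction, and then sum over pages. Monotonicity of $\nu_k(K_n)/\binom{n}{4}$ (Claim~\ref{cla:claimA}) could be used to lift an exact small-$n$ base case to an asymptotic statement, but only an \emph{exact} lower bound, not an asymptotic one, would yield the conjecture for all $n$.

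The main obstacle is precisely what makes this a conjecture rather than a theorem: for $k \ge 3$ there is no known lower-bound technique that matches $Z_k(n)$. The SDP relaxation carries an unavoidable integrality gap already at the Frieze-Jerrum level, while a combinatorial argument would have to generalize the already delicate $k=2$ analysis --- where optimal drawings are essentially rigid --- to a regime in which both the optimal partition of the matchings into pages and the interaction of crossings across pages are far less understood. I expect that genuinely new structural insight into optimal $k$-page drawings, beyond both the SDP and the direct counting approaches exhibited here, would be required to settle Conjecture~\ref{con:kn}.
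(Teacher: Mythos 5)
You are right to refuse to ``prove'' this statement: it is a conjecture, and the paper does not prove it either, so the only correct treatment is exactly the one you give --- observe that Proposition~\ref{pro:calcu} settles the upper bound $\nu_k(K_n)\le Z_k(n)$, that by Lemma~\ref{lem:reform} the missing half is an exact max-$k$-cut evaluation for $G_n$, and that no known lower-bound technique reaches $Z_k(n)$ for $k\ge 3$. Your diagnosis of the Frieze--Jerrum route matches the paper's own data: Tables~\ref{tab:JF bounds} and~\ref{tab:newtab} show the SDP bound stalling strictly below the conjectured limit (ratio $\approx 0.83$ even for $k=3$), so that relaxation alone can never certify the conjecture.

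The one substantive difference from the paper's treatment is that the paper, besides citing the $k\le 2$ theorems and the computational matches of Table~\ref{tab:max sat results}, proves a genuine partial result that your survey does not use: Proposition~\ref{pro:oddeven}, the identity $krZ_k(kr-1)=(kr-4)Z_k(kr)$ together with the counting argument showing that $\nu_k(K_{kr-1})=Z_k(kr-1)$ implies $\nu_k(K_{kr})=Z_k(kr)$. This is the general-$k$ analogue of the classical ``odd implies even'' phenomenon, and it is not merely evidence --- the paper uses it to derive the new exact value $\nu_3(K_{15})=165$ (Proposition~\ref{pro:315}) from the computed value $\nu_3(K_{14})$. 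If you are cataloguing viable attacks on the lower bound, this deletion/counting argument deserves a place alongside Claim~\ref{cla:claimA}: it reduces the conjecture for $n\equiv 0 \pmod k$ to the case $n\equiv -1 \pmod k$, which is a concrete, unconditional reduction rather than an asymptotic one. Otherwise your assessment --- that settling the conjecture requires structural insight beyond both the SDP relaxation and the known combinatorial machinery --- is precisely the position the paper itself takes.
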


The naturality and aesthetical appeal of the construction, plus the
fact that no construction to draw $K_n$ in $k$ pages with fewer
crossings is known, seem good enough reasons to put forward this
conjecture. Still, there is further evidence supporting the
conjecture:

\begin{itemize}
\item The statement is true
for $k\le 2$. For $k=1$ this is readily checked, and for $k=2$ it
follows since
it has been recently verified that $\nu_2(K_n)=Z_2(n)$~\cite{abregoetal}.
\item For all $k,n$ for which we now know the exact value of $\nu_k(K_n)$
(Table~\ref{tab:max sat results}, plus all $k,n$ such that
  $k> \ceil{n/2}$, for which it is known that
$\nu_k(K_n)=0$), we have
$\nu_k(K_n)=Z_k(n)$.
\end{itemize}

There is yet another argument that supports Conjecture~\ref{con:kn},
at a somewhat (but not completely; see Section~\ref{sub:keq3}) more speculative level.  Recall that
$Z_2(n):=\frac{1}{4}\floor{\frac{n}{2}}\floor{\frac{n-1}{2}}
\floor{\frac{n-2}{2}}\floor{\frac{n-3}{2}}$.  A well-known counting
argument shows that for every positive integer $r$, $\nu_2(K_{2r-1}) =
Z_2(2r+1)$ implies $\nu_2(K_{2r})=Z_2(2,2r)$. This
``odd implies even'' phenomenon is used, for instance, to determine
that the (usual) crossing number of $K_{12}$ is $Z_2(12)$: this follows
at once since the crossing number of $K_{11}$ is
$Z_2(11)$~\cite{panrichter}. An appealing feature of
Conjecture~\ref{con:3p} is that it implies a similar phenomenon for
every $k$:

\begin{proposition}\label{pro:oddeven}
For every positive integers $k$ and $r$, one has $krZ_k(kr-1)=(kr-4)Z_k(kr)$, and
\[
\nu_k(K_{kr-1}) = Z_k(kr-1) \text{\hglue 0.3
  cm}\implies
\text{\hglue 0.3 cm} \nu_k(K_{kr}) = Z_k(kr).
\]
\end{proposition}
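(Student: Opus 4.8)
The plan is to combine two ingredients: the purely arithmetic identity $krZ_k(kr-1)=(kr-4)Z_k(kr)$, and a standard vertex-deletion counting argument that produces a lower bound on $\nu_k(K_{kr})$ in terms of $\nu_k(K_{kr-1})$ carrying \emph{exactly} the constant $kr/(kr-4)$. The whole point of the identity is that it matches this constant precisely, so that the hypothesis $\nu_k(K_{kr-1})=Z_k(kr-1)$ forces the lower bound $\nu_k(K_{kr})\ge Z_k(kr)$, which together with the upper bound of Proposition~\ref{pro:calcu} yields equality.

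First I would dispatch the identity. Specializing the formula of Proposition~\ref{pro:calcu}: for $n=kr$ one has $\nmodk{kr}{k}=0$ and $\bigfloor{\frac{n}{k}}=r$, so $Z_k(kr)=k\,F(r,kr)$; for $n=kr-1$ one has $\nmodk{kr-1}{k}=k-1$ and $\bigfloor{\frac{n}{k}}=r-1$, so $Z_k(kr-1)=(k-1)F(r,kr-1)+F(r-1,kr-1)$. Substituting the explicit quartic \eqref{eq:def1} for $F$ turns both sides of $krZ_k(kr-1)=(kr-4)Z_k(kr)$ into explicit polynomials in $k$ and $r$, and a direct (if tedious) expansion confirms they agree identically. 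This is routine algebra that holds for all $k,r$, and I would simply carry it out.

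Next I would set up the counting argument. Fix an optimal $k$-page drawing $D$ of $K_{kr}$, so $D$ realizes $\nu_k(K_{kr})$ crossings. Deleting a vertex $v$ together with its incident edges leaves the remaining vertices in the same spine order and each surviving edge on its original page, hence a bona fide $k$-page drawing of $K_{kr-1}$; it therefore has at least $\nu_k(K_{kr-1})$ crossings. Since two chords sharing an endpoint never cross, every crossing of $D$ is determined by four distinct vertices, and thus survives in exactly $kr-4$ of the $kr$ single-vertex deletions. Double counting the surviving crossings over all deletions gives $(kr-4)\,\nu_k(K_{kr})=\sum_{v}\mathrm{cr}(D-v)\ge kr\,\nu_k(K_{kr-1})$.

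Finally I would combine the pieces. Assuming $\nu_k(K_{kr-1})=Z_k(kr-1)$ and invoking the identity, the last inequality reads $(kr-4)\,\nu_k(K_{kr})\ge kr\,Z_k(kr-1)=(kr-4)\,Z_k(kr)$; for $kr>4$ we cancel $kr-4>0$ to obtain $\nu_k(K_{kr})\ge Z_k(kr)$, and the reverse inequality $\nu_k(K_{kr})\le Z_k(kr)$ from Proposition~\ref{pro:calcu} gives equality (the cases $kr\le 4$ are immediate, since there the conclusion $\nu_k(K_{kr})=Z_k(kr)$ holds outright: $Z_k(kr)=0$ when $k\ge 2$, while $\nu_1(K_n)=\binom{n}{4}$ is already known). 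The one genuinely substantive point is the identity itself: there is no a priori reason the construction's crossing counts at $kr-1$ and $kr$ should satisfy exactly the relation dictated by the counting constant, so verifying $krZ_k(kr-1)=(kr-4)Z_k(kr)$ is where the real content lies, while the counting step is entirely standard.
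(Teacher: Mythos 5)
Your proposal is correct and follows essentially the same route as the paper: the identity is verified via the same specialization $Z_k(kr)=k\,F(r,kr)$ and $Z_k(kr-1)=(k-1)F(r,kr-1)+F(r-1,kr-1)$ followed by routine polynomial manipulation, and the implication uses the same vertex-deletion double-counting (each crossing involves four vertices, hence is counted exactly $kr-4$ times over the $kr$ induced drawings of $K_{kr-1}$) combined with the upper bound of Proposition~\ref{pro:calcu}. Your explicit handling of the degenerate cases $kr\le 4$, where one would otherwise divide by $kr-4\le 0$, is a small point of added care that the paper's proof passes over silently.
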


\begin{proof}
The first claim follows from 
\begin{equation*}
{ Z_k(kr-1)} =
{(k-1)F(r,kr-1) + F(r-1,kr-1)}
=\frac{kr-4}{r} F(r,kr)=\frac{kr-4}{kr}Z_k(kr),
\end{equation*}
after a long but routine manipulation.

Since $\nu_k(K_{kr}) \le Z_k(kr)$ (cf. Proposition~\ref{pro:calcu}), we only need
to prove the reverse inequality $\nu_k(K_{kr}) \ge Z_k(kr)$.
Suppose that $\nu_k(K_{kr-1}) = Z_k(kr-1)$.
Consider a $k$-page drawing $D$ of $K_{kr}$ with $\nu_k(K_{kr})$ crossings. This drawing contains
$kr$ drawings of $K_{kr-1}$, each of which has at least
$\nu_k(K_{kr-1}) = Z_k(kr-1)$ crossings. It is easy to
see that each crossing
gets counted exactly $kr-4$ times, and so
\begin{equation*}
\nu_k(K_{kr}) \ge \frac{kr Z_k(kr-1)}{kr-4}=k F(r,kr)=Z_k(kr). \qedhere
\end{equation*}
\end{proof}

In Section~\ref{sub:keq3} we will use Proposition~\ref{pro:oddeven} to
prove that $\nu_3(K_{15}) = 165$ (cf.~Proposition~\ref{pro:315}).


\subsection{Explicit estimates for $Z_k(n)$}\label{sub:kdividesn}

It is
clear that for each fixed $k$ and $q\in \{0,1,\ldots,k-1\}$,
there exists a polynomial $G_{k,q}(n)$ such that $G_{k,q}(n)=Z_k(n)$ for
all $n$ such that $n$\,mod\,$k=q$. For the case $q=0$, a 
routine manipulation yields the following.

\begin{observation}\label{obs:kdin} If $k$ divides $n$, then
\[
Z_k(n) =\biggl( \biggl(\frac{1}{12k^2}\biggr)\biggl(1-\frac{1}{2k}\biggr)\biggr) n^4 +
\biggl( -\frac{1}{4k}\biggr) n^3 + \biggl( \frac{7}{24k} +
\frac{1}{6}\biggr) n^2 +
\nonumber \biggl( -\frac{1}{4}\biggr) n.
\]
\end{observation}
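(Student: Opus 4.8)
The plan is to specialize Proposition~\ref{pro:calcu} to the divisible case and then carry out a direct substitution into the explicit quartic \eqref{eq:def1}. Since $k$ divides $n$, we have $\nmodk{n}{k}=0$ and $\bigfloor{n/k}=n/k$, so the first summand in the formula of Proposition~\ref{pro:calcu} vanishes and the second reduces to
\[
Z_k(n) = k\cdot F\left(\frac{n}{k},\,n\right).
\]
This is the only genuine idea in the argument; everything after it is bookkeeping.

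Next I would substitute $r=n/k$ into the expression
\[
F(r,n)=-\frac{r^4}{24}+\frac{n r^3}{12}-\frac{n r^2}{4}+\frac{7 r^2}{24}+\frac{n r}{6}-\frac{r}{4},
\]
replacing each power $r^j$ by $n^j/k^j$. Term by term this produces a sum of monomials in $n$ whose coefficients are rational functions of $k$: the two quartic contributions are $-n^4/(24k^4)$ and $n^4/(12k^3)$, the cubic contribution is $-n^3/(4k^2)$, the two quadratic contributions are $7n^2/(24k^2)$ and $n^2/(6k)$, and the linear contribution is $-n/(4k)$.

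Finally I would multiply through by $k$ and collect like powers of $n$. The only step requiring a moment's care is the coefficient of $n^4$, where the two surviving terms combine as
\[
-\frac{1}{24k^3}+\frac{1}{12k^2}=\frac{1}{12k^2}\left(1-\frac{1}{2k}\right),
\]
which is exactly the claimed leading coefficient. The multiplication by $k$ then turns $-n^3/(4k^2)$ into $-n^3/(4k)$, combines $7n^2/(24k^2)+n^2/(6k)$ into $\bigl(\tfrac{7}{24k}+\tfrac{1}{6}\bigr)n^2$, and turns $-n/(4k)$ into $-n/4$, matching the remaining coefficients. I do not anticipate any real obstacle: the argument is a one-line reduction followed by a routine expansion, and the resulting identity can be checked by hand or confirmed instantly with a computer algebra system.
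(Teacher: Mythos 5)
Your proposal is correct and is essentially the paper's own argument: the paper dismisses this Observation as ``a routine manipulation'' of the formula in Proposition~\ref{pro:calcu} for the case $q=0$, and your reduction $Z_k(n)=k\cdot F(n/k,n)$ followed by term-by-term expansion is exactly that manipulation, with all coefficients checking out.
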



We recall (see Section~\ref{sub:lus}) that $\nu_k(K_n)/\binom{n}{4} \ge
\nu_k(K_m)/\binom{m}{4}$, whenever $n > m \ge 4$.  Using this and
Observation~\ref{obs:kdin}, we obtain the following asymptotic
general estimate for $Z_k(n)$:

\begin{observation}\label{obs:kdin2} 
For each positive integer $k$,\[
Z_k(n) = \biggl(
\biggl(\frac{1}{12k^2}\biggr)\biggl(1-\frac{1}{2k}\biggr)\biggr) n^4 +
O(n^3).
\]
\end{observation}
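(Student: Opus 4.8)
The plan is to read off the asymptotics directly from the closed form for $Z_k(n)$ established in Proposition~\ref{pro:calcu}, namely
\[
Z_k(n) = \nmodk{n}{k}\cdot F\bigl(\bigfloor{n/k}+1,n\bigr) + \bigl(k-\nmodk{n}{k}\bigr)\cdot F\bigl(\bigfloor{n/k},n\bigr),
\]
together with the explicit polynomial expression for $F$ in \eqref{eq:def1}. Since $Z_k$ is a quasi-polynomial of period $k$, for each residue $q\in\{0,1,\dots,k-1\}$ it agrees with a degree-$4$ polynomial $G_{k,q}(n)$ on the class $n\equiv q$; Observation~\ref{obs:kdin} already pins down $G_{k,0}$, and in particular its leading coefficient $\frac{1}{12k^2}(1-\frac{1}{2k})$. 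The whole content of the statement is therefore that every $G_{k,q}$ shares this same leading coefficient, the residue only affecting the $O(n^3)$ tail.

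To see this I would fix $k$ and let $n\to\infty$, writing $q=\nmodk{n}{k}\in\{0,\dots,k-1\}$ (bounded) and $p=\floor{n/k}=n/k+O(1)$. From \eqref{eq:def1}, the only terms of $F(r,n)$ that are quartic in $n$ when $r=n/k+O(1)$ are $-r^4/24$ and $nr^3/12$; substituting $r=n/k+O(1)$ gives
\[
F(r,n)=\Bigl(-\frac{1}{24k^4}+\frac{1}{12k^3}\Bigr)n^4+O(n^3)=\frac{1}{12k^3}\Bigl(1-\frac{1}{2k}\Bigr)n^4+O(n^3),
\]
and the same expansion holds verbatim for $r=p$ and for $r=p+1$, since the two values of $r$ differ by $1$ and hence only in lower-order terms. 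Feeding this into the displayed formula and using $q+(k-q)=k$ then yields $Z_k(n)=k\cdot\frac{1}{12k^3}(1-\frac{1}{2k})n^4+O(n^3)=\frac{1}{12k^2}(1-\frac{1}{2k})n^4+O(n^3)$, as claimed.

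The one point that needs care — and the only place the argument could go wrong — is the assertion that the leading coefficient is genuinely independent of $q$ and that the error term is uniform. Both follow from the boundedness $0\le q<k$: the weights $q$ and $k-q$ feed into the $n^4$ term only through their sum $k$, and the implied constants in $F(r,n)=\dots+O(n^3)$ depend only on $k$, so they are uniform over the finitely many residue classes. As a cleaner alternative that even sharpens the error term, I could instead apply singularity analysis to the generating function of Proposition~\ref{pro:gfZ_k}: after cancellation the numerator vanishes to first order at $z=1$ while the denominator $(1-z)^3(1-z^k)^3$ vanishes to order $6$, leaving an order-$5$ pole at $z=1$ whose residue reproduces the coefficient $\frac{2k-1}{24k^3}=\frac{1}{12k^2}(1-\frac{1}{2k})$, whereas at every other $k$-th root of unity the numerator is nonzero and the pole has order only $3$, contributing merely $O(n^2)$. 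Finally, I would note that this asymptotic is precisely what is needed to combine $\nu_k(K_n)\le Z_k(n)$ with the monotonicity of Claim~\ref{cla:claimA} so as to bound $\lim_{n\to\infty}\nu_k(K_n)/\binom{n}{4}$ from above.
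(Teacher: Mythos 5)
Your proof is correct, and it takes a genuinely more self-contained route than the paper. The paper never expands $F$: it states Observation~\ref{obs:kdin2} as following from Observation~\ref{obs:kdin} (the exact degree-$4$ polynomial for $Z_k(n)$ on the class $k\mid n$, whose leading coefficient is $\frac{1}{12k^2}\bigl(1-\frac{1}{2k}\bigr)$) combined with the monotonicity $\nu_k(K_n)/\binom{n}{4}\ge\nu_k(K_m)/\binom{m}{4}$ recalled from Section~\ref{sub:lus}; that is, it extrapolates from the multiples of $k$ to all $n$. Read literally, this is loose: the monotonicity concerns the crossing number $\nu_k$, not the construction count $Z_k$, so by itself it cannot exclude a different leading coefficient on the other residue classes. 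Your argument closes exactly that gap: expanding \eqref{eq:def1} with $r=\lfloor n/k\rfloor+O(1)=n/k+O(1)$, the quartic contributions come only from $-r^4/24$ and $nr^3/12$, giving $F(r,n)=\frac{1}{12k^3}\bigl(1-\frac{1}{2k}\bigr)n^4+O(n^3)$ for both $r=p$ and $r=p+1$, and in Proposition~\ref{pro:calcu} the weights $q$ and $k-q$ enter the $n^4$ term only through their sum $k$, so every residue class has the same leading coefficient with error uniform over the finitely many classes. Your generating-function alternative via Proposition~\ref{pro:gfZ_k} is also sound: the numerator vanishes to order exactly $1$ at $z=1$ (derivative $-(2k-1)\neq 0$), so $G_k$ has a pole of order $5$ there with constant $\frac{2k-1}{k^3}$, yielding the coefficient $\frac{2k-1}{24k^3}=\frac{1}{12k^2}\bigl(1-\frac{1}{2k}\bigr)$, while at the other $k$-th roots of unity the numerator equals $(k-1)(1-\omega)\neq 0$ and the poles have order $3$, contributing only $O(n^2)$. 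In short, the paper's remark is shorter but essentially heuristic as stated, whereas either of your two routes is a complete proof, the second even sharpening the error term.
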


We note that the upper bound \eqref{eq:nuk upper bound}, given by
Shahrokhi et al.~\cite{sssv96}, is in line with this last observation 
(this was expected, since the DDS
construction and the construction in~\cite{sssv96} agree whenever $k$
divides $n$).

\subsection{Drawing $K_n$ in $3$ pages: further results}\label{sub:keq3}

A long but straightforward manipulation shows that
\begin{equation}\label{eq:k3gen}
Z_3(n)=
\begin{cases}
\frac{(n-6)(n-3)n(5n-9)}{648}, 
&\text{\rm \hglue
  0.2 cm if }
n\equiv 0\, (\text{\rm mod} \, 3);\\[0.2cm]
\frac{(n-4)(n-1)(5n^2 -29 n + 30)}{648},
 &\text{\rm \hglue
  0.2 cm if }
n\equiv 1\, (\text{\rm mod} \, 3);\\[0.2cm]
\frac{(n-2)(n-3)(n-5)(5n-4)}{648},
&\text{\rm \hglue
  0.2 cm if }
n\equiv 2\, (\text{\rm mod} \, 3).
\end{cases}
\end{equation}

We remark that this coincides with the number of crossings given by
Bla\v{z}ek and Koman in~\cite{bk}, where they briefly mentioned that
their construction for $2$ pages could be generalized to $k>2$ pages,
and reported the expression in~\eqref{eq:k3gen} for the number of
crossings obtained by drawing $K_n$ in $3$ pages
(no further details were given).

As we have already observed, $\nu_3(K_n)=Z_3(n)$ for all $8$ values
of $n\ge 7$ for which we have calculated $\nu_3(K_n)$
(Table~\ref{tab:max sat results}).
This evidence gives special credence to the case $k=3$ of Conjecture~\ref{con:kn}:

\begin{conjecture}[3-page crossing number of $K_n$]\label{con:3p}
For every positive integer $n$,
\[
 \nu_3(K_n) = Z_3(n).
\]
\end{conjecture}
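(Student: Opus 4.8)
The upper bound $\nu_3(K_n)\le Z_3(n)$ is already supplied by Proposition~\ref{pro:calcu}, so the entire difficulty lies in the matching lower bound $\nu_3(K_n)\ge Z_3(n)$. The plan is to organise the proof by the residue of $n$ modulo $3$, exploiting the three-case structure of $Z_3(n)$ in \eqref{eq:k3gen}. The most economical feature to exploit is Proposition~\ref{pro:oddeven}: for $k=3$ it reads $\nu_3(K_{3r-1})=Z_3(3r-1)\implies\nu_3(K_{3r})=Z_3(3r)$, so the case $n\equiv0\pmod3$ follows \emph{for free} from the case $n\equiv2\pmod3$. Thus it suffices to settle the two classes $n\equiv1$ and $n\equiv2\pmod3$ directly. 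Within each class I would proceed by induction on $r$, using the base cases already tabulated in Table~\ref{tab:max sat results} together with the standard counting recursion $\nu_3(K_n)\ge\frac{n}{n-4}\nu_3(K_{n-1})$ (each of the $n$ sub-drawings of $K_{n-1}$ has at least $\nu_3(K_{n-1})$ crossings and each crossing is counted $n-4$ times).

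The first obstacle surfaces immediately: the counting recursion is tight only across the transition $3r-1\to 3r$, precisely because of the identity $3rZ_3(3r-1)=(3r-4)Z_3(3r)$ in Proposition~\ref{pro:oddeven}; for every other step it strictly undershoots $Z_3(n)$, so naive induction cannot close the gap. The main engine must therefore be a genuinely structural argument, and the natural model is the recent resolution of the two-page case $\nu_2(K_n)=Z_2(n)$ by \'Abrego et al.~\cite{abregoetal}. Via Lemma~\ref{lem:reform}, a $3$-page drawing of $K_n$ is exactly a $3$-colouring of the chords of the cycle $C_n$ (the vertices of $G_n$), with $\nu_3(K_n)=|E_n|-\mbox{max-$3$-cut}(G_n)$; minimising crossings is minimising the number of monochromatic overlapping chord pairs. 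I would attempt to prove a local-exchange (``flip'') optimality theorem: every optimal $3$-colouring can be transformed, without increasing the monochromatic count, into the canonical DDS colouring that assigns three blocks of consecutive matchings $M_i$ to the three pages. Averaging an optimal colouring over the dihedral automorphism group of $G_n$ (the symmetry already used in Lemma~\ref{lemma:reformulation SDP}), combined with a discharging argument on the chords of each fixed cyclic distance, is the most plausible route to forcing this block structure.

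As a complementary and partly verifiable route I would strengthen the semidefinite relaxation. The plain Frieze--Jerrum bound is \emph{not} tight here: the $k=3$ row of Table~\ref{tab:newtab} exhibits a strict gap, so $\binom{n}{4}-\mathcal{FJ}_3(G_n)<Z_3(n)$. One would add valid max-$3$-cut inequalities (triangle and clique constraints, or a level-$2$ Lasserre tightening) and then hope to produce a \emph{closed-form} dual certificate. The symmetry reduction of Lemma~\ref{lemma:reformulation SDP} is decisive here, since it collapses the dual to $d+1$ linear matrix inequalities of order $d-1=\lfloor n/2\rfloor-1$; one would guess an explicit parametric family $(y_i,X^{(m)})$ in $n$ and reduce feasibility to verifying positive semidefiniteness of a handful of explicitly described circulant-derived matrices, checkable uniformly in $n$.

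The hard part will be the structural optimality statement of the second paragraph: proving that the DDS colouring globally minimises monochromatic overlaps among \emph{all} $3$-colourings of $G_n$. This is the three-page analogue of the still-open Guy conjecture for the planar crossing number, and no general technique is known to certify such global optimality of a symmetric construction. I expect the flip/averaging argument to handle large ranges of chord distances cleanly but to leave a delicate ``boundary'' interaction between the three blocks of matchings; controlling that interaction uniformly across both residue classes $n\equiv1,2\pmod3$, and correctly matching the half-integer corrections recorded in \eqref{eq:thero} when $n$ is even, is where the difficulty concentrates.
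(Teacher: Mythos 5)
You have not proved the statement, and in fact no proof of it exists: in the paper this is Conjecture~\ref{con:3p}, an \emph{open problem}, supported only by the computational evidence of Table~\ref{tab:max sat results}, the asymptotic lower bounds obtained from $\mathcal{FJ}_3(G_m)$, and the reduction in Proposition~\ref{pro:oddeven} (which yields Proposition~\ref{pro:315}). Your proposal correctly assembles the known ingredients --- the upper bound $\nu_3(K_n)\le Z_3(n)$ from Proposition~\ref{pro:calcu}, the max-cut reformulation of Lemma~\ref{lem:reform}, the fact that Proposition~\ref{pro:oddeven} settles $n\equiv 0\pmod 3$ once $n\equiv 2\pmod 3$ is settled, and the observation that the counting recursion $\nu_3(K_n)\ge \frac{n}{n-4}\,\nu_3(K_{n-1})$ is tight only across the step $3r-1\to 3r$ --- but the load-bearing step is entirely missing. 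Everything reduces to showing that the DDS $3$-colouring of the chords of $C_n$ attains $\mbox{max-$3$-cut}(G_n)=\binom{n}{4}-Z_3(n)$ for $n\equiv 1,2\pmod 3$, and for this you offer only candidate techniques (flip/exchange arguments, discharging, dihedral averaging) with no actual argument; you yourself concede that the ``boundary'' interactions between the three blocks of matchings are unresolved. Note also that averaging an optimal colouring over $\mbox{Aut}(G_n)$ cannot work as stated: a group average of integral colourings is not a colouring, and the symmetrisation of Lemma~\ref{lemma:reformulation SDP} lives at the level of the SDP relaxation, not of integral cuts.

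The complementary SDP route cannot succeed in the form you describe either. The Frieze--Jerrum relaxation has a strict asymptotic gap at $k=3$: the paper's computations give a lower bound of about $1.5452\times 10^{-1}$ for $\lim_n \nu_3(K_n)/\binom{n}{4}$, against the conjectured value $5/27\approx 1.8518\times 10^{-1}$, so no dual certificate for $\mathcal{FJ}_3$ can close the conjecture. Strengthening to clique or Lasserre-type constraints and then guessing a closed-form dual solution, uniformly valid in $n$, is not supported by any machinery in the paper, and no such certificate is known even for $k=2$, where the resolution $\nu_2(K_n)=Z_2(n)$ by \'Abrego et al.~\cite{abregoetal} required a genuinely combinatorial argument rather than an SDP one. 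In short, your text is a sensible research programme that correctly locates the difficulty, but it is not a proof: the central optimality claim for the DDS colouring is precisely the open conjecture, and it remains open.
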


As an additional support for this conjecture, we note that
our calculations reported in Section~\ref{sec:numerical results}
confirm that $\nu_3(K_n)$ is reasonably close to $Z_3(n)$, at least
asymptotically. Indeed, from Table~\ref{tab:newtab} we have
$\lim_{n\to\infty}\nu_3(K_n)/\binom{n}{4} \ge 0.15452$. This implies
that $\lim_{n\to\infty}\nu_3(K_n)/n^4 \ge 0.006438$. Now from
\eqref{eq:k3gen} we have $\lim_{n\to\infty}  Z_3(n)/n^4 =
5/648$. These results yield
\[
\lim_{n\to\infty} \frac{\nu_3(K_n)}{Z_3(n)}>
\frac{0.006438}{5/648} \approx 0.8344.
\]

We finally show that, as hinted above, 
Proposition~\ref{pro:oddeven} (the generalization to $k>2$ pages of
the ``odd implies even'' phenomenon for $k=2$) is not only a speculative curiosity:
we use this statement to determine the exact value of $\nu_3(K_{15})$:

\begin{proposition}\label{pro:315}
$\nu_3(K_{15}) = 165$.
\end{proposition}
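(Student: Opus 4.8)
The plan is to combine the exact upper bound from the DDS construction with the "odd implies even" lifting lemma (Proposition~\ref{pro:oddeven}), applied with $k=3$ and $r=5$, so that $kr=15$. First I would note that by Proposition~\ref{pro:oddeven}, to conclude $\nu_3(K_{15})=Z_3(15)$ it suffices to establish the hypothesis $\nu_3(K_{14})=Z_3(14)$, since $15=3\cdot 5$ means $kr-1=14$. From Table~\ref{tab:max sat results}, the exact value $\nu_3(K_{14})=121$ was computed with \texttt{Akmaxsat}, and one checks directly from \eqref{eq:k3gen} (the case $n\equiv 2\pmod 3$, since $14\equiv 2$) that $Z_3(14)=\frac{(14-2)(14-3)(14-5)(5\cdot 14-4)}{648}=\frac{12\cdot 11\cdot 9\cdot 66}{648}=121$. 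Thus the hypothesis of Proposition~\ref{pro:oddeven} holds for $k=3$, $r=5$.

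Next I would invoke the conclusion of Proposition~\ref{pro:oddeven} directly: since $\nu_3(K_{14})=Z_3(14)$, it follows that $\nu_3(K_{15})=Z_3(15)$. It then remains only to evaluate $Z_3(15)$ explicitly. Here $15\equiv 0\pmod 3$, so I would use the first branch of \eqref{eq:k3gen}, giving
\[
Z_3(15)=\frac{(15-6)(15-3)\cdot 15\cdot(5\cdot 15-9)}{648}=\frac{9\cdot 12\cdot 15\cdot 66}{648}=165.
\]
This completes the chain $\nu_3(K_{15})=Z_3(15)=165$.

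There is essentially no hard step here: the proof is a short deduction that packages a previously computed exact value (the $n=14$ entry of Table~\ref{tab:max sat results}) together with the already-proven lifting Proposition~\ref{pro:oddeven}. The only thing to be careful about is the bookkeeping — verifying that $14$ and $15$ fall into the correct residue classes mod $3$ so that the right branches of \eqref{eq:k3gen} are used, and confirming the arithmetic $Z_3(14)=121$ so that the hypothesis of the lifting lemma genuinely matches the computed table value. If I wanted to be self-contained I could also verify the consistency relation $krZ_k(kr-1)=(kr-4)Z_k(kr)$ from Proposition~\ref{pro:oddeven} numerically, i.e.\ $15\cdot 121=1815=(15-4)\cdot 165=11\cdot 165$, which serves as a reassuring cross-check that the two values are compatible. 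The main (and only real) reliance is on trusting the machine computation $\nu_3(K_{14})=121$; everything else is elementary evaluation of the closed-form \eqref{eq:k3gen} and a single application of the lifting lemma.
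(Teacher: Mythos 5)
Your proof is correct and follows exactly the paper's own route: apply Proposition~\ref{pro:oddeven} with $k=3$, $r=5$, using the computed value $\nu_3(K_{14})=121=Z_3(14)$ from Table~\ref{tab:max sat results} to conclude $\nu_3(K_{15})=Z_3(15)=165$. The only difference is that you spell out the arithmetic verification of $Z_3(14)$ and $Z_3(15)$ via \eqref{eq:k3gen}, which the paper leaves implicit.
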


\begin{proof}
It follows from Proposition~\ref{pro:oddeven}, using (from
our calculation, reported in Table~\ref{tab:max sat results}) that
$\nu_3(K_{14}) = Z_3(14)$.
\end{proof}

\section{Concluding remarks and open questions}\label{sec:concludingremarks}

De Klerk, Pasechnik and Warners~\cite{KPW} proved the lower bounds
$\alpha_k$ on the ratio
$\frac{\mbox{max-$k$-cut}(G)}{\mathcal{FJ}(G)}$ given in
Table~\ref{table:guarantees}.  These lower bounds may be used to
obtain upper bounds on $\nu_k(K_n)$ $(3 \le k\le 10)$, namely,
\[
\nu_k(K_n) \le |E_n| - \alpha_k \mathcal{FJ}(G_n),
\]
but
these bounds seem weaker than the upper bounds given by the best known drawings, based on computations for
$3 \le k \le 10$ and $n \le 69$.

\begin{table}[h!]
{\small
\begin{center}
\begin{tabular}{|c|c|c|c|c|c|c|c|c|c|}\hline
$k$:  & 3& 4& 5& 6 & 7 & 8 & 9 & 10 \\
\hline
 $\alpha_k$ & 0.836008  & 0.857487 & 0.876610 &0.891543 &  0.903259 &  0.912664 & 0.920367 & 0.926788  \\ \hline
\end{tabular}
\caption{ \label{table:guarantees} MAX-$k$-CUT approximation guarantees for $3\le k \le
10$}
\end{center}
}
\end{table}

\item
Limits of the type \eqref{eq:forodd} are of independent interest if one replaces the
$k$-page crossing number by the rectilinear crossing number. (The rectilinear crossing number of a graph
is the minimum number of edge crossings in  a drawing of the graph in the plane if all edges are drawn by straight lines.)
Indeed, for the rectilinear crossing number $ \overline{\Cr} (K_n)$, the limit
\[
\lim_{n \rightarrow \infty} \frac{\overline{\Cr}(K_n)}{
  \binom{n}{4}}
\]
is related to the Sylvester four point problem in geometric probability  as follows.
Consider an open set $R \in \mathbb{R}^2$ with finite area.
Denote by $q(R)$ the probability that the convex hull of four points in $R$, drawn uniformly at random, is a convex quadrilateral (as opposed to a line or triangle).
Scheinerman and Wilf \cite{Scheinerman-Wilf} showed that
\[
\inf_R q(R) = \lim_{n \rightarrow \infty} \frac{\overline{\Cr}(K_n)}{  \binom{n}{4}},
\]
where the infimum is taken over all open sets $R$ in the plane with finite area.

It remains an interesting question whether these limits also have alternative interpretations if one replaces the rectilinear crossing number
by other notions of crossing numbers, like the $k$-page crossing number.

In the second part of this work we will investigate the
$k$-page crossing numbers of certain complete bipartite graphs. We
will once again use optimization techniques, but the details are
somewhat different from those presented here, and are therefore best
presented seperately.

%
%

\paragraph{Acknowledgements.}
The authors are grateful to  Imrich Vrt'o for helpful comments.


\begin{thebibliography}{99}

{\small 


%

\bibitem{abregoetal} {\sc B.M.~\'Abrego, O.~Aichholzer,
  S.~Fern\'andez-Merchant, P.~Ramos, and G.~Salazar},
The $2$-page crossing number of $K_n$. {\em Proceedings of the 28th
  ACM Symposium on Computational Geometry (SoCG'12)}, to appear.


\bibitem{Bernhart-Kainen}
{\sc F. Bernhart and P. C. Kainen}, The book thickness of a graph, \emph{J.
Combin. Theory Ser. B} \textbf{27}(3) (1979), 320--331.

\bibitem{bk} {\sc J.~Bla\v{z}ek and M.~Koman},
A minimal problem concerning complete plane graphs. {\em 1964 Theory of Graphs and its Applications (Proc. Sympos. Smolenice, 1963)} pp. 113--117. Publ. House Czechoslovak Acad. Sci., Prague.

\bibitem{Buchheim-Zheng}
{\sc C.~Buchheim and L.~Zheng},
Fixed Linear Crossing Minimization by Reduction to the Maximum Cut Problem,
\emph{Lecture Notes in Computer Science} {\bf 4112} (2006), 507--516.

\bibitem{bip1} {\sc E.~Czabarka, O.~S\'ykora, O.; L.A.~Sz\'ekely, and
  I.~Vrt'o}, Biplanar crossing
numbers. I. A survey of results and problems.  {\em More sets, graphs
  and numbers},  Bolyai Soc. Math. Stud., 15, Springer, Berlin, 2006, pp.~57--77.

\bibitem{bip2} {\sc E.~Czabarka, O.~S\'ykora, O.; L.A.~Sz\'ekely, and
  I.~Vrt'o}, Biplanar crossing numbers. II. Comparing crossing numbers
  and biplanar crossing numbers using the probabilistic method.  {\em
    Random Structures Algorithms} {\bf  33}  (2008),  480--496.
%

\bibitem{leighton} {\sc F.R.K.~Chung, F.T.~Leighton, and A.L.~Rosenberg},
  Embedding graphs in books: a layout problem with applications to
  VLSI design, {\em SIAM J. Algebraic Discrete Methods} {\bf  8}  (1987), 33--58.

\bibitem{DaDASa} {\sc E.~Damiani, O.~D'Antona, and P.~Salemi},
An upper bound to the crossing number of the complete graph drawn on
the pages of a book.
{\em J.~Combin.~Inform.~System~Sci.} {\bf 19} (1994), 75--84.



\bibitem{KPW} {\sc E.~de Klerk, D.V.~Pasechnik, and J.P~Warners},
On approximate graph colouring and MAX-$k$-CUT algorithms based on the
$\vartheta$-function. {\em J. Comb. Optim.} {\bf 8} (2004), 267--294.

\bibitem{dp} {\sc E.~de Klerk and D.V.~Pasechnik},
Improved lower bounds for the 2-page crossing numbers of $K_{m,n}$
  and $K_n$ via semidefinite programming, {\em SIAM J.~Opt.}, {\bf 22} (2012), 581--595.








\bibitem{Enomoto} {\sc H.~Enomoto, T.~Nakamigawa, and K.~Ota}, On the
  Pagenumber of Complete Bipartite Graphs. {\em J.~Comb.~Theory
    Ser.~B} {\bf 71} (1997), 111--120.

 \bibitem{GaPa} {\sc K.~Gatermann and P.A.~Parrilo},
 Symmetry groups, semidefinite programs, and sums of squares,
 {\em J. Pure Appl. Algebra}, {\bf 192} (2004), 95--128.

\bibitem{goe95}
{\sc M.X.~Goemans and D.P.~Williamson},
Improved approximation algorithms for maximum cut and satisfiability
  problems using semidefinite programming,
{\em Journal of the ACM}, {\bf 42}(6) (1995), 1115--1145.



\bibitem{circulant matrices}
{\sc R.M.~Gray}, Toeplitz and Circulant Matrices: A review,
\emph{Foundations and Trends in Communications and Information Theory},
\textbf{2}(3),  (2006), 155--239.

\bibitem{guy2} {\sc R.K.~Guy, T.~Jenkyns, and J.~Schaer}, The toroidal
  crossing number of the complete graph, {\em J.~Combinatorial Theory}
  {\bf 4} (1968), 376--390.


\bibitem{hararyhill} {\sc F.~Harary and A.~Hill},
On the number of crossings in a complete graph,
{\em Proc.~Edinburgh Math.~Soc. (2)} {\bf 13} (1962/1963), 333--338.

\bibitem{Harborth}
{\sc H.~Harborth},
Special numbers of crossings for complete graphs.
\emph{Discrete Mathematics} {\bf 244} (2002), 95--102.


\bibitem{he1} {\sc H.~He, A.~S\v{a}l\v{a}gean, and E.~M\"akinen}, One- and
  two-page crossing numbers for some types of graphs, {\em
International Journal of Computer Mathematics} {\bf 87} (2010),
  1667--1679.
%
\bibitem{he2} {\sc H.~He, O.~S\'ykora, and E.~M\"akinen}, Genetic algorithms
  for the 2-page drawing problem of graphs,
{\em Journal of Heuristics} {\bf 13} (2007), 77--93.
%
%
%
\bibitem{he5} {\sc H.~He, O.~S\'ykora, and I.~Vrt'o}, Minimisation
  Heuristics for 2-page Drawings, {\em Electronic Notes in Discrete
    Mathematics} {\bf 22} (2005), 527--534.
%
%
%
%



\bibitem{Kainen} {\sc P.C.~Kainen},
An introduction to topological graph theory, in {\em Graphs and Combinatorics}, R.~Bari and F.~Harary, Eds. Springer, New York, 1974, pp.~76--108.

\bibitem{akmaxsat}
{\sc A. K\"ugel},
Improved Exact Solver for the Weighted Max-SAT Problem.
In: {\em Proceedings of POS-10. Pragmatics of SAT} (Daniel Le Berre, Ed.).
EasyChair Proceedings in Computing {\bf 8} (2012), pp.~15--27.

\bibitem{YALMIP}
{\sc J.~L\"ofberg},
YALMIP: A Toolbox for Modeling and Optimization in MATLAB.
{\em Proceedings of the CACSD Conference, Taipei, Taiwan, 2004},\hfill
\\ \url{http://control.ee.ethz.ch/~joloef/yalmip.php}


\bibitem{masuda0} {\sc S.~Masuda, K.~Nakajima, T.~Kashiwabara, and
  T.~Fujisawa},  On the NP-completeness of a computer network layout
  problem, {\em Proceedings of the 1987 IEEE International Symposium
    on Circuits and Systems}, IEEE Press, New York, NY (1987), 292--295.

\bibitem{masuda} {\sc S.~Masuda, K.~Nakajima, T.~Kashiwabara, and
  T.~Fujisawa},  Crossing minimization in linear embeddings of
  graphs, {\em  IEEE Trans.~Comput.} {\bf 39}  (1990), 124--127.

\bibitem{muder} {\sc D.J.~Muder, M.L.~Weaver, and D.B.~West},
  Pagenumber of complete bipartite graphs. {\em J.~Graph Theory} {\bf 12}
    (1988) 469--489.

\bibitem{panrichter} {\sc S.~Pan and R.B.~Richter}, The crossing number of
  $K_{11}$ is $100$, {\em J.~Graph Theory} {\bf 56}  (2007),  128--134.

\bibitem{BiqMac}
{\sc F.~Rendl, G.~Rinaldi, and A.~Wiegele},
Solving {M}ax-{C}ut to Optimality by Intersecting Semidefinite and Polyhedral Relaxations,
{\em Math. Programming}, {\bf 121} (2010),  307--335.

\bibitem{Richter-Thomassen}
{\sc R.B.~Richter and C.~Thomassen}, Relations between crossing numbers of complete and complete bipartite
graphs, {\em Amer.~Math.~Monthly} {\bf 104} (1997), 131--137.

%

\bibitem{SAGE}
{\sc W. Stein et. al.}, 
Sage: Open Source Mathematics Software (version 5.0). The Sage Group,
June 2012, \url{http://www.sagemath.org.}

\bibitem{Scheinerman-Wilf}
{\sc E.R.~Scheinerman and H.S.~Wilf},
The rectilinear crossing number of a complete graph and Sylvester's ``four point problem'',
\emph{Amer.~Math.~Monthly}, {\bf 101}:10 (1994), 939--943.

\bibitem{sssv96} {\sc F.~Shahrokhi, O.~S\'ykora, L.A.~Sz\'ekely, and
  I.~Vrt'o}, The book crossing number of a graph, {\em J.~Graph Theory}
  {\bf 21} (1996), 413--424.

\bibitem{sssv07}
{\sc F.~Shahrokhi, O.~Sykora, L.A.~Szekely, I.~Vrt'o}, On k-planar crossing numbers, \emph{Discrete Applied Mathematics} 155 (2007), 1106-1115.

\bibitem{spencer} {\sc J.~Spencer}, The biplanar crossing number of the
  random graph.  {\em Towards a theory of geometric graphs}
  Contemp. Math., 342, Amer. Math. Soc., Providence, RI, 2004,
  pp.~269-- 271.

\bibitem{rstI} {\sc R.~Stanley}, {\em Enumerative Combinatorics}, 
volume 1, 2nd edition, Cambridge University Press, 2011, 
\url{http://www-math.mit.edu/~rstan/ec/ec1/}

\bibitem{SDPT3-ref1} {\sc K.C.~Toh, M.J.~Todd, and R.H.~Tutuncu}, SDPT3 ---
  a Matlab software package for semidefinite programming, {\em
    Optimization Methods and Software} {\bf 11} (1999), 545--581.


\bibitem{SDPT3-ref2} {\sc R.H.~Tutuncu, K.C.~Toh, and M.J.~Todd}, Solving
  semidefinite-quadratic-linear programs using SDPT3, {\em
    Mathematical Programming Ser. B} {\bf 95} (2003), 189--217.

\bibitem{vrto2} {\sc I.~Vrt'o}, Two special crossing number problems for
  complete graphs,
{\em Abstracts of the 6th Slovenian International Conference on Graph
Theory Bled'07}.

\bibitem{woodall} {\sc D.R.~Woodall},
Cyclic-Order Graphs and Zarankiewicz's Crossing-Number Conjecture,
{\em J.~Graph Theory} {\bf 17} (1993), 657--671.

\bibitem{Yannanakis}
{\sc M. Yannakakis},
Embedding planar graphs in four pages,
\emph{Journal of Computer and System Sciences},
38(1) (1989), 36--67.

%
\bibitem{zaran} {\sc K.~Zarankiewicz},
On a Problem of P.~Tur\'{a}n Concerning Graphs, {\em Fund.~Math.} {\bf
  41} (1954), 137--145,
}
  \end{thebibliography}
 \end{document}